\newtheorem{thm}{Theorem}[section]
\newtheorem{cor}[thm]{Corollary}
\newtheorem{lem}[thm]{Lemma}
\newtheorem{prop}[thm]{Proposition}
\theoremstyle{definition}
\theoremstyle{remark}
\newtheorem{rem}[thm]{Remark}
\numberwithin{equation}{section}
\begin{document}

\title[]{Solutions and stability of  generalized Kannappan's and Van Vleck's functional equations}%
\author{Elqorachi Elhoucien and Redouani Ahmed}%
\address{University Ibn Zohr, Faculty of Sciences, Department of Mathematics, Agadir, Morocco}%
\email{Eqorachi@hotmail.com; Redouani-ahmed@yahoo.fr}%
\thanks{2000 Mathematics Subject Classification: 39B82, 39B32, 39B52.}%
\keywords{Hyers-Ulam stability; semigroup; d'Alembert's  equation;
Van Vleck's equation; Kannappan's equation; involution;
automorpnism; multiplicative function; complex
measure}%

\begin{abstract}
We study  the solutions of the integral Kannappan's and Van Vleck's
functional equations
$$\int_{S}f(xyt)d\mu(t)+\int_{S}f(x\sigma(y)t)d\mu(t) = 2f(x)f(y),
\;x,y\in S;$$
$$\int_{S}f( x\sigma(y)t)d\mu(t)-\int_{S}f(xyt)d\mu(t) = 2f(x)f(y),
\;x,y\in S,$$  where $S$ is a semigroup, $\sigma$ is an involutive
automorphism of $S$ and $\mu$ is a linear combination of Dirac
measures $(\delta_{z_{i}})_{i\in I}$, such that for all $i\in I$,
$z_{i}$ is contained in the center of $S$. We show that the
solutions of these equations  are closely related to the solutions
of the  d'Alembert's classic functional equation with an involutive
automorphism. Furthermore, we obtain the superstability theorems
that these functional equations are superstable in the general case,
where $\sigma$ is an involutive morphism.
\end{abstract}
\maketitle
\section{Introduction}
Throughout this paper  $S$ denotes a semigroup: A set equipped with
an associative operation. We write the operation multiplicatively. A
function $\chi$ : $S\longrightarrow \mathbb{C}$ is said to be
multiplicative if $\chi(xy)=\chi(x)\chi(y)$ for all $x,y\in S.$  Let
$\sigma$ : $S\longrightarrow G$  denotes an involutive morphism,
that is $\sigma$  is an involutive automorphism:
$\sigma(xy)=\sigma(x)\sigma(y)$ and $\sigma(\sigma(x))=x$ for all
$x,y\in S$)  or  $\sigma$ is an  involutive anti-automorphism:
$\sigma(xy)=\sigma(y)\sigma(x)$ and
$\sigma(\sigma(x))=x$ for all $x,y\in S.$\\
 Van Vleck \cite{V1,V2} studied the continuous solutions $f$
: $\mathbb{R} \longrightarrow \mathbb{R}$, $f\neq 0$ of the
following functional equation
\begin{equation}\label{eq1}
f(x - y + z_0)-f(x + y + z_0) = 2f(x)f(y),\;x,y \in
\mathbb{R},\end{equation} where $z_0>0$ is fixed. He showed that any
continuous solution with minimal period $4z_0$ has to be the sine
function $f(x)=\sin(\frac{\pi}{2z_0}x)=\cos(\frac{\pi}{2z_0}(x-z_{0})$, $x\in \mathbb{R}$. \\
Kannappan \cite{K} proved that any solution  $f$:
$\mathbb{R}\longrightarrow \mathbb{C}$ of the functional equation
\begin{equation}\label{eq2} f(x+y+z_0)+f(x-y+z_0) = 2f(x)f(y),\;x,y
\in \mathbb{R}\end{equation} is periodic, if $z_0\neq 0$.
Furthermore, the periodic solutions  has the form $f(x)=g(x-z_0)$
where $g$ is a periodic solution of d'Alembert functional equation
\begin{equation}\label{eq3}
g(x+y)+g(x-y) = 2g(x)g(y),\;x,y \in \mathbb{R}.\end{equation}\\
Stetk\ae r [33, Exercise 9.18] found the complex-valued solutions of
the functional equation
\begin{equation}\label{eq3} f(xy^{-1}z_0)-f(xyz_0) =
2f(x)f(y),\;x,y\in G\end{equation} on groups $G$, where $z_0$ is a
fixed element  in the center of  $G$.
\\Perkins and Sahoo \cite{P} replaced the group inversion by an
involutive anti-automorphism $\sigma$: $G\longrightarrow G$ and they
obtained the abelian, complex-valued solutions of the functional
equation
\begin{equation}\label{eq4} f(x\sigma(y)z_0)-f(xyz_0) = 2f(x)f(y),\;x,y
\in G.\end{equation}  Stetk\ae r \cite{St3} extends the results of
Perkins and Sahoo \cite{P} about equation (\ref{eq4}) to the more
general case where $G$ is a semigroup and the solutions are not
assumed to be abelian and $z_0$ is a fixed element in the center of
$G$. \\Recently, Bouikhalene and Elqorachi \cite{Bouikha}  obtained
the solutions of an extension of Van Vleck's functional equation
\begin{equation}\label{eq600} \chi(y)f(x\sigma(y)z_0)-f(xyz_0) =
2f(x)f(y),\;x,y \in S\end{equation} on semigroup $S$, and where
$\chi$ is a multiplicative function such that $\chi(x\sigma(x))=1$
for all $x\in S.$ \\
There has been quite a development of the theory of d'Alembert's
functional equation \begin{equation}\label{eq6000}
    g(xy)+g(x\sigma(y))=2g(x)g(y),\; x,y\in G,
\end{equation} during the last ten years on non
abelian groups. The non-zero solutions of d'Alembert's functional
equation (\ref{eq6000}) for general groups, even monoids are the
normalized traces of certain representations of the group $G$ on
$\mathbb{C}^{2}$ \cite{da1,da2}.\\Stetk\ae r \cite{stkan} expressed
the complex-valued solutions of Kannappan's functional equation
\begin{equation}\label{eq200000} f(xyz_0)+f(x\sigma(y)z_0) =
2f(x)f(y),\;x,y \in S\end{equation} on semigroups in terms of
solutions of d'Alembert's functional equation (\ref{eq6000}).
\\Elqorachi \cite{Elqorachi} extended the results of Stetk\ae r
\cite{stkan, St3} to the following generalizations of Kannappan's
functional equation
\begin{equation}\label{eq02000}
\int_{S}f(xyt)d\mu(t)+\int_{S}f(x\sigma(y)t)d\mu(t) = 2f(x)f(y),
\;x,y\in S
\end{equation} and Van Vleck functional equation
\begin{equation}\label{eq030000}\int_{S}f( x\sigma(y)t)d\mu(t)-\int_{S}f(xyt)d\mu(t) = 2f(x)f(y),
\;x,y\in S,\end{equation} where  $\mu$ is a linear combination of
Dirac measures $(\delta_{z_{i}})_{i\in I}$, with $z_{i}$  contained
in the center of the semigroup $S$, for all $i\in I$ and where
$\sigma$ is an involutive anti-automorphism of $S$.\\ Recently,
Zeglami and  Fadli \cite{zeglami} obtained the continuous and
central solutions of (\ref{eq02000}) and (\ref{eq030000}) on locally
compact groups. Related studies of functional equations
like (\ref{eq02000}) can be found in \cite{ba, akk2, el1, el2}.\\
The stability of functional equations  highlighted a phenomenon
which is usually called superstability. Consider the functional
equation $ E(f ) = 0$ and assume we are in a framework where the
notion of boundedness of $f$ and of $E( f)$ makes sense. We say that
the equation $ E(f ) = 0$ is superstable if the boundedness of $E(f
)$ implies that either $f$ is bounded or $f$ is a solution of $ E(f
) = 0$. This property was first observed when the following theorem
was proved by Baker, Lawrence, and  Zorzitto \cite{j1}: Let $V$ ba a
vector space. If a function $f$: $V\longrightarrow \mathbb{R}$
  satisfies the inequality $| f(x+y)-f(x)f(y)|\leq \varepsilon$ for some $\varepsilon>0$ and for all $x,y\in V$, then either $f$ is
bounded on $V$ or $f(x+y)=f(x)f(y)$ for all $x,y\in V.$
  \\
The result was generalized by  Baker \cite{j2}, by replacing $V$ by
a semigroup and $\mathbb{R}$ by a normed algebra $E$, in which the
norm is multiplicative, by  Ger and  \v{S}emrl \cite{se}, where $E$
is an arbitrary commutative complex semisimple Banach algebra and by
Lawrence \cite{la} in the case where $E$ is the algebra of all
$n\times n$ matrices. Different generalization of the result of
Baker, Lawrence and Zorzitto have been obtained. We mention for
example \cite{b}, \cite{e},  \cite{ger}, \cite{5}, \cite{10},
\cite{kim1}, \cite{kim2} and \cite{red}.
\\The first purpose of this paper is to extend the
results of Stetk\ae r \cite{St3, stkan} about the  Kannappan's
functional equation (\ref{eq02000}) and Van Vleck's functional
equation (\ref{eq030000}) to the case, where $\sigma$ is an
involutive automorphism of $S$.\\ By using similar methods and
computations to those in \cite{Elqorachi} we prove that the
solutions of (\ref{eq02000}) and (\ref{eq030000}) are also closely
related to the solutions of the   d'Alembert's classic functional
equation (\ref{eq6000}) (with $\sigma$ an involutive automorphism)
which has not been studied much on non-abelian semigroups.
Exceptions are Stetk\ae r [\cite{sph}, Example 6] (continuous
solutions), Sinopoulos \cite{sino} (general solutions) for a special
involutive automorphism $\sigma$ of the Heisenberg group. We show
that any solution of (\ref{eq6000}) is proportional to a solution of
(\ref{eq6000}). We prove that all solutions of the integral Van
Vleck's functional equation (\ref{eq030000}) are abelian and as
 an application we obtain some results  about abelian solutions of (\ref{eq6000}).\\
 In our proofs we do not need the crucial proposition [33,
Proposition 8.14] used in the proofs of the main results in \cite{Elqorachi} and \cite{St3, stkan}.  \\
The second purpose of this paper is to prove the superstability of
equations (\ref{eq02000}) and (\ref{eq030000}).  We show that the
superstability of these functional equations is closely related to
the superstability of the  Wilson's classic functional equation
$$f(xy)+f(x\sigma(y))=2f(x)g(y)\;x,y\in S,$$
 and consequently,  we obtain the superstability theorems of equations
(\ref{eq02000}) and (\ref{eq030000}) on semigroups that are not
necessarily abelian and where $\sigma$ is an involutive morphism.
\section{Integral Kannappan's functional equation on semigroups}
In this section we study the complex-valued solutions of the
functional equation (\ref{eq02000}), where $\sigma$ is an involutive
automorphism  and $\mu$ is a linear combination of Dirac measures
$(\delta_{z_{i}})_{i\in I}$, such that  $z_{i}$ is contained in the
center of $S$ for all $i\in I$.\\Throughout this paper we use in
(all) proofs without explicit mentioning the assumption that for all
$i\in I$; $z_i$ is in the center of $S$ and its consequence
$\sigma(z_i)$ is in the center of $S$. The following lemma has been
obtained in \cite{Elqorachi} for $\sigma$ an involutive
anti-automorphism. It is still true, where $\sigma$ is an involutive
automorphism. In the proof we adapt similar computations used in
\cite{Elqorachi}.
\begin{lem}  If $f$: $S\longrightarrow \mathbb{C}$ is a solution of (\ref{eq02000}), then  for
all $x\in S$
\begin{equation}\label{eqo77}
    f(x)=f(\sigma(x)),
\end{equation}
\begin{equation}\label{eqo88}
    \int_{S}f(t)d\mu(t)\neq 0\Longleftrightarrow f\neq 0,
\end{equation}
\begin{equation}\label{eqo999}
    \int_{S}\int_{S}f(x\sigma(t)s)d\mu(t)d\mu(s)=f(x)\int_{S}f(t)d\mu(t),
\end{equation}
\begin{equation}\label{eqo1000}
    \int_{S}\int_{S}f(xts)d\mu(t)d\mu(s)=f(x)\int_{S}f(t)d\mu(t).
\end{equation}
  \end{lem}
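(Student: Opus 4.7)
The plan is to first establish the symmetry (\ref{eqo77}) via an involution substitution, then derive a pointwise d'Alembert-type identity by a double-counting argument, and finally use it together with the centrality of the $z_i$ to obtain (\ref{eqo88})--(\ref{eqo1000}).

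For (\ref{eqo77}), replace $y$ by $\sigma(y)$ in (\ref{eq02000}); since $\sigma^2=\mathrm{id}$, the left side is unchanged, while the right side becomes $2f(x)f(\sigma(y))$. Hence $f(x)[f(y)-f(\sigma(y))]\equiv 0$, and so $f=f\circ\sigma$ whenever $f\not\equiv 0$ (the identity is trivial if $f\equiv 0$). Assume now $f\not\equiv 0$; otherwise (\ref{eqo88})--(\ref{eqo1000}) are immediate. The key intermediate step is the identity
\[
f(z)[f(xy)+f(x\sigma(y))] = f(x)[f(yz)+f(y\sigma(z))], \qquad x,y,z\in S. \qquad (\star)
\]
To prove $(\star)$, compute
\[
I := \int_S\!\bigl[f(xyzt)+f(xy\sigma(z)t)+f(x\sigma(y)zt)+f(x\sigma(y)\sigma(z)t)\bigr]d\mu(t)
\]
in two ways via (\ref{eq02000}): applying (\ref{eq02000}) at $(xy,z)$ and at $(x\sigma(y),z)$ yields $I=2f(z)[f(xy)+f(x\sigma(y))]$; applying (\ref{eq02000}) at $(x,yz)$ and at $(x,y\sigma(z))$ yields $I=2f(x)[f(yz)+f(y\sigma(z))]$. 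The second grouping uses the automorphism identities $\sigma(yz)=\sigma(y)\sigma(z)$ and $\sigma(y\sigma(z))=\sigma(y)z$.

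Fix $x_0, z_0\in S$ with $f(x_0)f(z_0)\neq 0$. Setting $x=x_0$ in $(\star)$ expresses $f(yz)+f(y\sigma(z))$ as $f(z)h(y)$ for some $h$; setting $z=z_0$ expresses $f(xy)+f(x\sigma(y))$ as $f(x)g(y)$ for some $g$. Renaming variables in the former yields $f(xy)+f(x\sigma(y))=f(y)h(x)$; comparing with the latter gives $f(x)g(y)=f(y)h(x)$, forcing $g=h=\alpha f$ for a common constant $\alpha\in\mathbb{C}$. Thus
\[
f(xy)+f(x\sigma(y))=\alpha f(x)f(y), \qquad x,y\in S. \qquad (\dagger)
\]
Moreover $\alpha\neq 0$: otherwise, substituting $x\mapsto xz_i$ in $(\dagger)$, summing with $c_i$, and comparing with (\ref{eq02000}) gives $f(x)f(y)\equiv 0$, contradicting $f\not\equiv 0$.

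To conclude, substitute $x\mapsto xz_i$ in $(\dagger)$, sum with $c_i$, and use (\ref{eq02000}) to get $\alpha f(y)F(x)=2f(x)f(y)$, where $F(x):=\int_S f(xz)d\mu(z)$; hence $F=(2/\alpha)f$. Substituting $x=z_i$ in $(\dagger)$, summing with $c_i$, and combining with (\ref{eqo77}) and the closed form for $F$ gives $(4/\alpha)f(y)=\alpha k f(y)$, where $k:=\int_S f\, d\mu$; therefore $k=4/\alpha^2\neq 0$, which is (\ref{eqo88}). Finally,
\[
\int_S\!\int_S f(xts)d\mu(t)d\mu(s) = \int_S F(xt)d\mu(t) = \tfrac{2}{\alpha}F(x) = \tfrac{4}{\alpha^2}f(x) = kf(x),
\]
which is (\ref{eqo1000}); subtracting this from the identity $\int\!\int f(xts)d\mu(t)d\mu(s)+\int\!\int f(x\sigma(t)s)d\mu(t)d\mu(s)=2kf(x)$ (obtained by integrating (\ref{eq02000}) against $d\mu(y)$) yields (\ref{eqo999}). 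The main obstacle is spotting the double-counting behind $(\star)$; the rest is careful bookkeeping with central elements and the automorphism property.
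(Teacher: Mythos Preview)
Your proof is correct, and it follows a genuinely different route from the paper's. The paper does not spell out the argument for this lemma; it simply says that the computations from \cite{Elqorachi} (the anti-automorphism case) adapt verbatim. From the stability analogue (Lemma~3.1) one can see what those computations are: one proves (\ref{eqo999}) by setting $x=\sigma(s)$ in (\ref{eq02000}), integrating in $s$, and using $f\circ\sigma=f$ together with the centrality of the $z_i$; then (\ref{eqo1000}) follows by setting $y=s$, integrating, and subtracting (\ref{eqo999}); finally (\ref{eqo88}) is obtained by assuming $\int_S f\,d\mu=0$, making further substitutions, and forcing $f\equiv 0$. Everything stays at the level of integrated identities.

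Your argument instead extracts a \emph{pointwise} identity: via the double-counting step $(\star)$ you show that any nonzero solution already satisfies $f(xy)+f(x\sigma(y))=\alpha f(x)f(y)$ for some nonzero constant $\alpha$. This is in fact stronger than what the lemma asserts, and it essentially anticipates Theorem~2.2 (since $g:=(\alpha/2)f$ then solves d'Alembert's equation (\ref{eq6000})). Once $(\dagger)$ is in hand, the integrated identities (\ref{eqo88})--(\ref{eqo1000}) fall out by elementary bookkeeping with $F(x)=\int_S f(xt)\,d\mu(t)=(2/\alpha)f(x)$. The paper's approach is quicker if one only wants the lemma, but yours is more conceptual and gives the connection to d'Alembert's equation for free; it also makes the role of the centrality hypothesis very transparent (it is used exactly where you pass from $\int f(xty)\,d\mu(t)$ to $\int f(xyt)\,d\mu(t)$).
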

The following notations will be used later.\\
- $\mathcal{A}$ consists of the solutions of $g:$ $S\longrightarrow
\mathbb{C}$ of d'Alembert's functional equation (\ref{eq6000}) with
$\int_{S}g(t)d\mu(t)\neq 0$ and satisfying the condition
\begin{equation}\label{pr2}
  \int_{S}g(xt)d\mu(t)=g(x)\int_{S}g(t)d\mu(t)\; \text{for all}\;x\in S.\end{equation}
\\- To any $g\in \mathcal{A}$ we associate the function
$Tg=\int_{S}g(t)d\mu(t)g:$ $S\longrightarrow \mathbb{C}$.\\
- $\mathcal{K}$ consists of the non-zero solutions $f:$
$S\longrightarrow \mathbb{C}$ of the integral Kannappan's functional
equation (\ref{eq02000}). \\In the following theorem the complex
solutions of equation (\ref{eq02000}) are expressed by means of
solutions of d'Alembert's functional equation (\ref{eq6000}).
\begin{thm}{(1)} $T$ is a bijection of
$\mathcal{A}$ onto $\mathcal{K}$. The inverse $T^{-1}$:
$\mathcal{K}\longrightarrow \mathcal{A}$ is given by the formula
$$(T^{-1}f)(x)=\frac{\int_{S}f(xt)d\mu(t)}{\int_{S}f(t)d\mu(t)}$$ for
all $f\in \mathcal{K}$ and $x\in S. $\\{(2)} Any non-zero solution
$f$: $S\longrightarrow \mathbb{C}$ of the integral Kannappan's
functional equation (\ref{eq02000}) is of the form
$f=\int_{S}g(t)d\mu(t)g$, where $g\in \mathcal{A}$. Furthermore,
$f(x)=\int_{S}g(xt)d\mu(t)=\int_{S}g(x\sigma(t))d\mu(t)=\int_{S}g(t)d\mu(t)g(x)$
for all $x\in S.$\\(3) $f$ is central i.e. $f(xy)=f(yx)$ for all
$x,y\in S$  if and only if $g$ is central.\\(4) If $S$ is equipped
with a topology and $\sigma$: $S\longrightarrow S$ is continuous
then $f$ is continuous  if and only if $g$ is continuous.
\end{thm}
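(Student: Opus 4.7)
The overall strategy is to identify Wilson's functional equation
$$f(xy)+f(x\sigma(y))=2g(x)f(y),\quad x,y\in S,$$
as the intermediary between the integral Kannappan equation for $f$ and d'Alembert's classical equation for $g$. The main tools throughout are the double-integral identities (\ref{eqo999}) and (\ref{eqo1000}) of the preceding lemma, together with the centrality of each atom $z_{i}$ of $\mu$.

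The easy direction $T(\mathcal{A})\subseteq\mathcal{K}$ of part~(1) is a direct verification: if $g\in\mathcal{A}$ and $f:=\left(\int_{S}g\,d\mu\right)g$, then property (\ref{pr2}) applied to $g$ gives $\int_{S}f(xyt)\,d\mu(t)=\left(\int_{S}g\,d\mu\right)^{2}g(xy)$ and similarly for the $\sigma$-term; adding and invoking d'Alembert for $g$ produces $2f(x)f(y)$, while non-triviality of $f$ is immediate from $g\neq 0$ and $\int_{S}g\,d\mu\neq 0$.

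For the opposite inclusion, given $f\in\mathcal{K}$ I would set $c:=\int_{S}f\,d\mu$, which is non-zero by (\ref{eqo88}), and define $g(x):=c^{-1}\int_{S}f(xt)\,d\mu(t)$. The plan is then to run through five steps. Step~(i): replace $x$ by $xu$ in (\ref{eq02000}) and integrate against $d\mu(u)$; centrality allows the re-orderings $xuyt=xyut$ and $xu\sigma(y)t=x\sigma(y)ut$, after which (\ref{eqo1000}) (applied with $x$ replaced by $xy$ and by $x\sigma(y)$) collapses each double integral to $c\,f(xy)$ and $c\,f(x\sigma(y))$ respectively, yielding Wilson's equation. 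Step~(ii): substitute $x\mapsto xt$ in Wilson, integrate $d\mu(t)$, and compare with (\ref{eq02000}) to deduce $f(x)=\int_{S}g(xt)\,d\mu(t)$ after cancelling some non-zero $f(y_{0})$. Step~(iii): substitute $y\mapsto yz$ and $y\mapsto y\sigma(z)$ in Wilson, add the two relations, and pair the four resulting terms on the left so that Wilson can be applied twice (once with leading argument $xy$, once with $x\sigma(y)$); on the right, a third application of Wilson to $f(yz)+f(y\sigma(z))$ produces $2g(y)f(z)$, and cancelling a non-zero $f(z)$ delivers d'Alembert for $g$. Step~(iv): exchange $x$ and $y$ in Wilson, apply $f=f\circ\sigma$ from (\ref{eqo77}) together with the automorphism relations $\sigma(yx)=\sigma(y)\sigma(x)$ and $\sigma(y\sigma(x))=\sigma(y)x$, and compare with Wilson evaluated at first argument $\sigma(y)$; this forces $g=g\circ\sigma$. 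Step~(v): integrate d'Alembert for $g$ against $d\mu$ and use $g=g\circ\sigma$ to rewrite $\int_{S}g(x\sigma(t))\,d\mu(t)=\int_{S}g(\sigma(x)t)\,d\mu(t)=f(\sigma(x))=f(x)$; this simultaneously yields $f(x)=g(x)\int_{S}g\,d\mu$ and property (\ref{pr2}), and forces $\int_{S}g\,d\mu\neq 0$ since $f\neq 0$.

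The additional integral identities of (2) are read off from steps (ii) and (v) via $g=g\circ\sigma$ and $f=f\circ\sigma$. Bijectivity of $T$ is then immediate: $T\circ T^{-1}=\mathrm{id}$ is the content of step (v), and $T^{-1}\circ T=\mathrm{id}$ is a direct computation using (\ref{pr2}). For (3) and (4), since $f=Kg$ with $K:=\int_{S}g\,d\mu\neq 0$, the functions $f$ and $g$ are proportional by a non-zero scalar, so centrality and continuity of $f$ are equivalent to those of $g$. The main obstacle in the whole argument is Step~(iii), where d'Alembert for $g$ is extracted from a triple application of Wilson by the pairwise regrouping trick; the rest is bookkeeping with central elements, the involution, and the double-integral identities of the lemma.
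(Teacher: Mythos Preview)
Your argument is correct and self-contained, but it follows a genuinely different path from the paper's own proof. The paper does \emph{not} pass through Wilson's equation to establish Theorem~2.2; it instead refers the reader to \cite{Elqorachi} for the fact that $g$ satisfies d'Alembert (those computations being declared to carry over verbatim from the anti-automorphism case), and then isolates property~(\ref{pr2}) as the single new point requiring proof. That point it settles by a direct quadruple-integral computation: one substitutes $x\mapsto xks$, $y\mapsto r$ and $x\mapsto xs$, $y\mapsto kr$ in (\ref{eq02000}), integrates, and compares the two resulting identities using (\ref{eqo999}) and (\ref{eqo1000}) to obtain $\int_{S}\int_{S}f(kr)\,d\mu(k)\,d\mu(r)\int_{S}f(xs)\,d\mu(s)=\int_{S}\int_{S}f(xks)\,d\mu(k)\,d\mu(s)\int_{S}f(r)\,d\mu(r)$, which is~(\ref{pr2}) after dividing through.

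Your route via Wilson is more conceptual and arguably cleaner: once $f(xy)+f(x\sigma(y))=2g(x)f(y)$ is in hand, the regrouping trick of Step~(iii) gives d'Alembert for $g$ without any reference to \cite{Elqorachi}, and your Step~(v) then yields both $f=\bigl(\int_{S}g\,d\mu\bigr)g$ and~(\ref{pr2}) in one stroke from d'Alembert plus the symmetry $g=g\circ\sigma$. The paper, by contrast, keeps d'Alembert and~(\ref{pr2}) as logically separate ingredients and proves~(\ref{pr2}) by brute force at the level of $f$. It is worth noting that the paper does eventually introduce the Wilson intermediary, but only later, in the superstability Sections~3 and~5; you have in effect imported that idea back into the exact-solution section, which makes the proof independent of \cite{Elqorachi}.
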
 \begin{proof} Similar computations to those of   \cite{Elqorachi}, where  $\sigma$ anti-automorphism involutive, can  be adapted  to the present situation.
The only assertion
 that need  proof is that the function
$$g(x)=\frac{\int_{S}f(xt)d\mu(t)}{\int_{S}f(t)d\mu(t)}$$   defined in \cite{Elqorachi} satisfies the
condition (\ref{pr2}).\\ By replacing $x$ by $xks$ and $y$ by $r$ in
(\ref{eq02000}) and integrating the result obtained with respect to
$k$, $s$ and $r$ we get
\begin{equation}\label{red1}
\int_{S}\int_{S}\int_{S}\int_{S}f(xksrt)d\mu(k)d\mu(s)d\mu(r)d\mu(t)+\int_{S}\int_{S}\int_{S}\int_{S}f(xks\sigma(r)t)d\mu(k)d\mu(s)d\mu(r)d\mu(t)\end{equation}
$$=2\int_{S}\int_{S}f(xks)d\mu(k)d\mu(s)\int_{S}f(r)d\mu(r)=2f(x)\bigg(\int_{S}f(s)d\mu(s)\bigg)^{2}.$$
By replacing $x$  by $xs$ and $y$ by $kr$ in (\ref{eq02000}) and
integrating the result obtained with respect to $k$, $s$ and $r$ we
obtain
\begin{equation}\label{red2}
\int_{S}\int_{S}\int_{S}\int_{S}f(xskrt)d\mu(s)d\mu(k)d\mu(r)d\mu(t)+\int_{S}\int_{S}\int_{S}\int_{S}f(xs\sigma(r)\sigma(k)t)d\mu(k)d\mu(s)d\mu(r)d\mu(t)\end{equation}
$$=2\int_{S}\int_{S}f(kr)d\mu(k)d\mu(r)\int_{S}f(xs)d\mu(s).$$
From (\ref{eqo999}) and (\ref{eqo1000}) we have
$$\int_{S}\int_{S}\int_{S}\int_{S}f(xks\sigma(r)t)d\mu(k)d\mu(s)d\mu(r)d\mu(t)=\int_{S}\int_{S}f(xks)d\mu(k)d\mu(s)\int_{S}f(s)d\mu(s)$$
$$=f(x)\bigg(\int_{S}f(s)d\mu(s)\bigg)^{2}$$
and
$$\int_{S}\int_{S}\int_{S}\int_{S}f(xr\sigma(k)t\sigma(s))d\mu(k)d\mu(s)d\mu(r)d\mu(t)=\int_{S}\int_{S}\int_{S}\int_{S}f(xr\sigma(k))d\mu(r)d\mu(k)\int_{S}f(s)d\mu(s)$$
$$=f(x)\bigg(\int_{S}f(s)d\mu(s)\bigg)^{2}.$$ In
view of (\ref{red1}) and (\ref{red2}) we deduce that
$$
\int_{S}\int_{S}f(kr)d\mu(k)d\mu(r)\int_{S}f(xs)d\mu(s)=\int_{S}\int_{S}f(xks)d\mu(k)d\mu(s)\int_{S}f(r)d\mu(r).$$
So, by using the expression of $g$ we obtain
$$\int_{S}g(xs)d\mu(s)=g(x)\int_{S}f(s)d\mu(s)$$ for all $x\in S.$
This completes the proof.
\end{proof}\begin{rem} In Stetk\ae r's paper \cite{stkan} about Kannappan's functional equation on semigroups,
more precisely in the definition of the set $\mathcal{A}$  other
assertions  which are equivalent to (\ref{pr2})  are needed to prove
the main result in \cite{stkan}. We notice here that we do not need
these statements to show the main result. The same note  is also
valid   for
 the manuscript \cite{Elqorachi}.
\end{rem}
Now, we extend Stetk\ae r's result \cite{stkan} from
anti-automorphisms to the more general case of morphism as follows.
\begin{cor}Let $z_0$ be a fixed element in the center of a semigroup $S$ and let  $\sigma$ be an involutive morphism of $S$. Then, any non-zero solution
$f$: $S\longrightarrow \mathbb{C}$ of  the functional equation
(\ref{eq200000}) is of the form $f=g(z_0)g$, where $g$ is a
 solution of d'Alembert's functional equation (\ref{eq6000})
with $g(z_0)\neq 0$ and satisfying the condition
$g(xz_0)=g(z_0)g(x)$ for all $x\in S$.
\end{cor}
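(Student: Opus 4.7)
The plan is to specialize Theorem 2.2 (and its anti-automorphism analogue from \cite{Elqorachi}) to the Dirac measure $\mu=\delta_{z_0}$. Since $z_0$ is in the center of $S$, this $\mu$ is a permissible choice in the sense required throughout Section 2, so both the Lemma and Theorem 2.2 apply directly.

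First I would observe that with $\mu=\delta_{z_0}$ all integrals collapse to point evaluations: $\int_S f(xt)\,d\mu(t)=f(xz_0)$ and $\int_S f(t)\,d\mu(t)=f(z_0)$. Consequently, the integral Kannappan equation (\ref{eq02000}) becomes literally (\ref{eq200000}), the auxiliary condition (\ref{pr2}) becomes $g(xz_0)=g(z_0)g(x)$ for all $x\in S$, and the operator $T$ reduces to $Tg=g(z_0)g$. Thus the class $\mathcal{A}$ specializes exactly to d'Alembert solutions $g$ with $g(z_0)\neq 0$ satisfying $g(xz_0)=g(z_0)g(x)$, which is the class appearing in the corollary.

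Next I would split on the type of morphism. If $\sigma$ is an involutive automorphism, I apply Theorem 2.2(2) directly to conclude that every non-zero solution $f$ of (\ref{eq200000}) has the form $f=g(z_0)g$ with $g\in\mathcal{A}$, which is the desired conclusion. If $\sigma$ is an involutive anti-automorphism, the same reasoning applies using the anti-automorphism version of Theorem 2.2 established in \cite{Elqorachi}; the specialization $\mu=\delta_{z_0}$ again yields exactly the stated form. Since an involutive morphism is by definition one of these two cases, combining them proves the corollary.

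I do not expect a genuine obstacle here, as the work has been carried out in the theorem and in \cite{Elqorachi}; the only thing to check carefully is that with $\mu=\delta_{z_0}$ the condition $\int_S g(t)\,d\mu(t)\neq 0$ in the definition of $\mathcal{A}$ is the same as $g(z_0)\neq 0$, and that $Tg=g(z_0)g$ gives exactly $f(z_0)=g(z_0)^2\neq 0$, which is compatible with $f$ being non-zero. These are immediate from the definitions, so the corollary follows as a clean specialization of the general theorem.
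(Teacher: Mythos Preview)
Your proposal is correct and matches the paper's approach: the corollary is stated without proof in the paper, immediately after Theorem 2.2, as a direct specialization to $\mu=\delta_{z_0}$ (the automorphism case coming from Theorem 2.2, the anti-automorphism case being Stetk{\ae}r's original result in \cite{stkan}, which is what the paper says it is extending). Your identification of how each ingredient---the integral equation, condition (\ref{pr2}), the non-vanishing condition, and the map $T$---collapses under $\mu=\delta_{z_0}$ is exactly what is needed.
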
\begin{cor}If $\sigma=I$, where $I$ is the identity map of $S$. Then, any non-zero solution
$f$: $S\longrightarrow \mathbb{C}$ of  Kannappan's functional
equation $$\int_{S}f(xyt)d\mu(t)=f(x)f(y),\;x,y\in S$$ is of the
form $f=\chi\int_{S}\chi(t)d\mu(t)$, where $\chi$ is a
multiplicative function such that $\int_{S}\chi(t)d\mu(t)\neq
0$.\end{cor}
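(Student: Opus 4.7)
The plan is to derive the corollary as a direct specialization of Theorem 2.2 to the case $\sigma = I$, and to observe that in this situation the auxiliary d'Alembert equation degenerates to the multiplicativity equation.

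First I would rewrite the hypothesis: since $\sigma = I$, the integral Kannappan equation (\ref{eq02000}) becomes
$$2\int_{S}f(xyt)\,d\mu(t) = 2f(x)f(y), \qquad x,y\in S,$$
which is exactly the equation in the statement. Thus the non-zero solutions of the equation of the corollary are precisely the elements of $\mathcal{K}$ (for $\sigma=I$), and Theorem 2.2(2) applies.

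Next I would invoke Theorem 2.2(2): any such $f$ has the form $f = \int_{S}g(t)\,d\mu(t)\,g$, where $g\in\mathcal{A}$. I would then specialize the definition of $\mathcal{A}$ to $\sigma = I$. D'Alembert's equation (\ref{eq6000}) becomes
$$g(xy) + g(xy) = 2g(x)g(y),$$
so $g$ is a multiplicative function, call it $\chi$. Moreover, for a multiplicative $\chi$ the side condition (\ref{pr2}) is automatic since
$$\int_{S}\chi(xt)\,d\mu(t) = \chi(x)\int_{S}\chi(t)\,d\mu(t),$$
and the remaining requirement $\int_{S}\chi(t)\,d\mu(t)\neq 0$ is kept. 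Substituting $g=\chi$ into the formula for $f$ gives $f = \chi\int_{S}\chi(t)\,d\mu(t)$, as desired.

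There is essentially no hard step here; the one point worth checking for soundness is that every multiplicative function $\chi$ with $\int_{S}\chi(t)\,d\mu(t)\neq 0$ really does lie in $\mathcal{A}$ when $\sigma=I$ (ensuring the characterization is sharp, not just one-sided). This is immediate from the computation above, so the corollary follows. The only mild obstacle is notational: one must keep track that Theorem 2.2 was set up for $\sigma$ an involutive automorphism, which is satisfied trivially by $\sigma = I$, and that the symmetry relation (\ref{eqo77}) collapses to a tautology in this case.
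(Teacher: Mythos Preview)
Your proof is correct and matches the paper's intended argument: the corollary is stated without proof in the paper precisely because it is an immediate specialization of Theorem 2.2 to $\sigma = I$, where d'Alembert's equation (\ref{eq6000}) reduces to multiplicativity. Your additional remark that condition (\ref{pr2}) is automatic for multiplicative $\chi$ is a nice touch that confirms the description is sharp.
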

\begin{rem} The result of the Corollary 2.5 is also true without the assumption  that $\mu$  is a linear
combination of Dirac measures  $\delta_{z_{i}}$ with  $z_{i}$
 contained in the center of $S$ (see \cite{salma}).
 \end{rem}
\begin{cor} The non-zero central solutions of the integral Kannappan's
functional equation (\ref{eq02000}), where $\sigma$ is an involutive
automorphism of $S$ are the functions of the form
$$f(x)=\bigg[\frac{\chi(x)+\chi(\sigma(x))}{2}\bigg]\int_{S}\chi(t)d\mu(t),\; x\in
S,$$ where $\chi:$ $S\longrightarrow \mathbb{C}$ is a multiplicative
function such that\\ $\int_{S}\chi(t)d\mu(t)\neq 0$ and
$\int_{S}\chi(\sigma(t))d\mu(t)=\int_{S}\chi(t)d\mu(t)$.
\end{cor}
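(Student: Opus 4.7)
The plan is to apply Theorem 2.3 to reduce the problem to one about d'Alembert's equation, and then invoke the known classification of central d'Alembert solutions. By Theorem 2.3(2)--(3), any non-zero central solution $f$ of (\ref{eq02000}) has the form $f = \int_S g(t)\,d\mu(t)\cdot g$ where $g\in\mathcal{A}$ is central, i.e.\ $g$ is a central solution of the d'Alembert equation (\ref{eq6000}) satisfying $\int_S g(t)\,d\mu(t)\neq 0$ together with the condition (\ref{pr2}). So the task reduces to describing such $g$.

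Next I would use the standard classification of central solutions of d'Alembert's equation with an involutive morphism: every central $g:S\to\mathbb{C}$ solving (\ref{eq6000}) with $\sigma$ an involutive automorphism has the form
$$g(x)=\frac{\chi(x)+\chi(\sigma(x))}{2}, \qquad x\in S,$$
for some multiplicative function $\chi:S\to\mathbb{C}$. (The analogous statement for $\sigma$ an anti-automorphism is Stetk\ae r's \emph{Corollary 3.19} in the monograph referenced in the paper; the automorphism case is obtained by the same argument, replacing the product $xy$ in the bilinear identity with $\sigma(y)x$ where needed. The fact that $g(\sigma(x))=g(x)$ follows from (\ref{eqo77}) applied to the constant-multiple-of-$g$ solution, or directly from the symmetry of (\ref{eq6000}).)

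Now I would substitute this expression for $g$ into the condition (\ref{pr2}) and simplify. Using that $\chi$ and $\chi\circ\sigma$ are multiplicative, the left side becomes
$$\int_S g(xt)\,d\mu(t)=\frac{1}{2}\Bigl[\chi(x)\!\int_S\chi(t)\,d\mu(t)+\chi(\sigma(x))\!\int_S\chi(\sigma(t))\,d\mu(t)\Bigr],$$
while the right side is $\tfrac{1}{4}(\chi(x)+\chi(\sigma(x)))\bigl(\int_S\chi\,d\mu+\int_S\chi\circ\sigma\,d\mu\bigr)$. Equating and collecting coefficients of $\chi(x)$ and $\chi(\sigma(x))$ yields
$$\bigl(\chi(x)-\chi(\sigma(x))\bigr)\Bigl(\int_S\chi(t)\,d\mu(t)-\int_S\chi(\sigma(t))\,d\mu(t)\Bigr)=0$$
for all $x\in S$. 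Either $\chi=\chi\circ\sigma$ (in which case the integral identity is automatic), or the second factor vanishes; in both cases $\int_S\chi(t)\,d\mu(t)=\int_S\chi(\sigma(t))\,d\mu(t)$. Writing this common value as $\int_S\chi\,d\mu$, the hypothesis $\int_S g\,d\mu\neq 0$ translates to $\int_S\chi(t)\,d\mu(t)\neq 0$, and the formula $f=\int_S g\,d\mu\cdot g$ collapses to the advertised expression. For the converse I would just verify by direct computation that any function of the stated form solves (\ref{eq02000}) and is central, which is routine given that $\chi$ is multiplicative and the $z_i$'s lie in the center of $S$.

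The main obstacle is simply checking (or citing cleanly) the classification step: central d'Alembert solutions $=(\chi+\chi\circ\sigma)/2$ in the automorphism case. Once that is in hand, the rest is a bookkeeping computation. A minor subtlety is ruling out degenerate characters (e.g.\ ensuring the non-vanishing condition on $\int_S\chi\,d\mu$ passes back and forth between $g$ and $\chi$), but the identity $\int_S\chi\,d\mu=\int_S\chi\circ\sigma\,d\mu$ makes this transparent.
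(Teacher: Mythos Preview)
Your proposal is correct and follows essentially the same route as the paper: reduce to a central solution $g$ of d'Alembert's equation via the main theorem (the paper's Theorem~2.2, not 2.3), classify $g$ as $(\chi+\chi\circ\sigma)/2$, and then extract the integral condition from (\ref{pr2}). The one place where you hesitate---the classification of central d'Alembert solutions in the automorphism case---the paper resolves by a direct citation to Stetk{\ae}r's paper \emph{A variant of d'Alembert's functional equation} (reference \cite{stetkaer}), which treats precisely this case; so no adaptation from the anti-automorphism result is needed.
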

\begin{proof} From Theorem 2.2, if $f$ is a central solution of (\ref{eq02000}) then $g$ is a central solution
of d'Alembert's functional equation (1.9), with $\sigma$  an
involutive automorphism of $S$. In view of \cite{stetkaer}, there
exists a non-zero multiplicative function $\chi$: $S\longrightarrow
\mathbb{C}$ such that\begin{equation}\label{alembert}
g(x)=\frac{\chi(x)+\chi(\sigma(x))}{2}\end{equation}
 for all $x\in S.$ So, $f(x)=[\frac{\chi(x)+\chi(\sigma(x))}{2}]\int_{S}\chi(t)d\mu(t)$
with $\int_{S}\chi(t)d\mu(t)\neq0$. On the other hand by
substituting the condition
$\int_{S}g(xt)d\mu(t)=g(x)\int_{S}g(t)d\mu(t)$ into (\ref{alembert})
we get $\int_{S}\chi(\sigma(t))d\mu(t)=\int_{S}\chi(t)d\mu(t)$. This
completes the proof.
\end{proof}
\section{Superstability of the Intergral Kannappan functional
equation (\ref{eq02000})}In this section we obtain the
superstability result of equation (\ref{eq02000}) on semigroups not
necessarily abelian. Later, we need the following Lemma.
\begin{lem}  Let $\sigma$ be an involutive morphism  of $S$. Let $\mu$ be a complex
measure that is a linear combination of Dirac measures
$(\delta_{z_{i}})_{i\in I}$, such that  $z_{i}$ is contained in the
center of $S$ for all $i\in I$. Let $\delta>0$ be fixed.  If $f:
S\longrightarrow \mathbb{C}$ is an  unbounded function which
satisfies the inequality
\begin{equation}\label{ahm1}
 |\int_{S}f(xyt)d\mu(t)+\int_{S}f(x\sigma(y)t)d\mu(t)-2f(x)f(y)|\leq \delta
\end{equation} for all $x,y\in S$. Then, for all $x\in S$
\begin{equation}\label{ahm2}
    f(\sigma(x))=f(x),
\end{equation}
\begin{equation}\label{ahm3}
|\int_{S}\int_{S}f(x\sigma(s)t)d\mu(s)d\mu(t)-f(x)\int_{S}f(t)d\mu(t)|\leq
\frac{\delta}{2}\|\mu\|,
\end{equation}
\begin{equation}\label{ahm4}
|\int_{S}\int_{S}f(xst)d\mu(s)d\mu(t)-f(x)\int_{S}f(t)d\mu(t)|\leq
\frac{3\delta}{2}\|\mu\|,
\end{equation}
\begin{equation}\label{ahm4'}
\int_{S}f(t)d\mu(t)\neq 0.
\end{equation}
The function $g$ defined by
\begin{equation}\label{ahm5}g(x)=
\frac{\int_{S}f(xt)d\mu(t)}{\int_{S}f(t)d\mu(t)}\; \text{for}\;x\in
S\end{equation} is unbounded on $S$ and satisfies the following
inequalities.
\begin{equation}\label{ahm6}
 |g(xy)+g(x\sigma(y))-2g(x)g(y)|\leq
 \frac{3\delta}{(\int_{S}f(s)d\mu(s))^{2}}\|\mu\|^{2},
\end{equation}
\begin{equation}\label{ahm7}
 |\int_{S}g(xt)d\mu(t)-g(x)\int_{S}g(t)d\mu(t)|\leq
 \frac{(5/4)\delta\|\mu\|^{3}+(1/4)\delta\|\mu\|^{2}}{(|\int_{S}f(s)d\mu(s)|)^{2}}+\frac{\delta\|\mu\|}{|\int_{S}f(s)d\mu(s)|}
\end{equation} for all $x,y\in S$. Furthermore,
$g$ is a non-zero solution of  d'Alembert's functional equation
(\ref{eq6000}) and satisfies the condition (\ref{pr2}). That is
$T^{-1}f=g\in \mathcal{A}.$
\end{lem}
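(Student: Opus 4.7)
The plan is to prove the eight conclusions of the lemma in the order listed, moving from structural identities for $f$ to approximate functional equations for $g$, and then upgrading them to exact identities via the unboundedness of $g$.

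\textbf{Involution-invariance and the double-integral estimates.} For (\ref{ahm2}) I substitute $y\mapsto\sigma(y)$ in (\ref{ahm1}); since $\sigma$ is an involutive morphism with $\sigma(\sigma(y))=y$, the two integrals on the left are interchanged and their sum is unchanged, so comparison with the original inequality gives $|f(x)|\,|f(\sigma(y))-f(y)|\le\delta$ for every $x,y\in S$, and the unboundedness of $f$ forces $f\circ\sigma=f$. For (\ref{ahm3})--(\ref{ahm4}), I carry the computations behind (\ref{eqo999}) and (\ref{eqo1000}) of Lemma~2.1 over to the inequality setting. Integrating (\ref{ahm1}) against $d\mu(y)$ already produces the ``sum estimate''
\[
\Bigl|\int\!\!\int f(xst)\,d\mu(s)d\mu(t)+\int\!\!\int f(x\sigma(s)t)\,d\mu(s)d\mu(t)-2Kf(x)\Bigr|\le\delta\|\mu\|,
\]
where $K=\int f\,d\mu$. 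A second, independent substitution in (\ref{ahm1}) -- for instance replacing $x$ by $x\sigma(r)$ and integrating in $r$, using the centrality of all the $z_i$ together with (\ref{ahm2}) to reroute the $\sigma$ -- supplies the additional linear relation needed to isolate each double integral. The sharp constants $\delta/2$ and $3\delta/2$ come out of triangle-inequality bookkeeping, and indeed (\ref{ahm4}) can be obtained as the difference of the sum estimate and (\ref{ahm3}).

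\textbf{$K\neq0$, and unboundedness of $g$.} If $K=0$, then (\ref{ahm3})--(\ref{ahm4}) bound both double integrals uniformly in $x$; feeding this back into (\ref{ahm1}) evaluated at $y=z_i$ in the support of $\mu$ and averaging yields a uniform bound on $f$, contradicting the unboundedness hypothesis. Hence $K\neq0$, so $g(x):=\int f(xt)d\mu(t)/K$ is well-defined. Rewriting (\ref{ahm4}) as $\bigl|Kf(x)-K\int g(xt)d\mu(t)\bigr|\le(3\delta/2)\|\mu\|$ shows that if $g$ were bounded then $f$ would also be bounded; hence $g$ is unbounded.

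\textbf{Approximate d'Alembert equation and approximate (\ref{pr2}).} For (\ref{ahm6}) I substitute $x\mapsto xs$, $y\mapsto yr$ in (\ref{ahm1}) and integrate against $d\mu(s)d\mu(r)$. The right-hand side becomes $2K^{2}g(x)g(y)$; the two resulting triple integrals on the left, after using centrality to shift $s$ and $r$ past $y$ (resp.\ $\sigma(y)$) and invoking the identity $\int f(w\sigma(t))d\mu(t)=\int f(\sigma(w)t)d\mu(t)$ that follows from (\ref{ahm2}), are brought into the form of (\ref{ahm4}) applied at $xy$ and (\ref{ahm3}) applied at $x\sigma(y)$, collapsing to $K^{2}g(xy)+O(\delta\|\mu\|^{2})$ and $K^{2}g(x\sigma(y))+O(\delta\|\mu\|^{2})$ respectively. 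Dividing by $K^{2}$ yields (\ref{ahm6}), with the numerator $3\delta$ accounting for $\delta+\tfrac{3\delta}{2}+\tfrac{\delta}{2}$. Inequality (\ref{ahm7}) then follows from one more averaging in $\mu$ that compares $\int g(xt)d\mu(t)$ with $g(x)\int g(t)d\mu(t)$; the explicit rational constants track the three independent error sources along the chain.

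\textbf{Passage to exact equations, and main obstacle.} Finally, the unboundedness of $g$ forces (\ref{ahm6}) and (\ref{ahm7}) to hold exactly. One iterates (\ref{ahm6}) at the pairs $(x,yz)$ and $(xy,z)$, uses the associativity of $S$ and that $\sigma$ is a morphism to match terms, and evaluates along a sequence $z_n$ with $|g(z_n)|\to\infty$; dividing through by $g(z_n)$ and passing to the limit crushes the bounded errors to zero and yields d'Alembert's equation (\ref{eq6000}) exactly. The same device applied to (\ref{ahm7}) produces (\ref{pr2}) exactly, so $g\in\mathcal{A}$ and $T^{-1}f=g$. The main obstacle is the separation step in the first paragraph: the original inequality supplies only a single linear combination of the two double integrals, and engineering a second independent substitution that yields precisely the sharp constants $\delta/2$ and $3\delta/2$ is the delicate point. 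Once those sharp estimates are in place the remainder is systematic error accumulation followed by a standard Baker-type superstability argument.
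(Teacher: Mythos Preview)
Your overall architecture matches the paper's: prove $f\circ\sigma=f$, extract the two double-integral estimates, show $K:=\int f\,d\mu\neq0$, define $g$, derive approximate d'Alembert and approximate~(\ref{pr2}), then upgrade both via unboundedness (the paper cites \cite{BELAID} for the d'Alembert step and uses exactly the multiplication-by-$g(y)$ trick you describe for~(\ref{pr2})). Two of your steps, however, do not go through as written.

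For (\ref{ahm3}), the substitution $x\mapsto x\sigma(r)$ leaves $y$ free and produces a relation at the points $xy$ and $x\sigma(y)$, not a second linear relation in $x$ alone; it cannot be paired with the sum estimate to isolate $\int\!\!\int f(x\sigma(s)t)\,d\mu\,d\mu$. The paper instead puts $x=\sigma(s)$ in (\ref{ahm1}) and integrates in $s$: after using $f\circ\sigma=f$ and centrality, the two integrals on the left become \emph{identical}, so one reads off $\bigl|2\int\!\!\int f(y\sigma(s)t)\,d\mu\,d\mu-2Kf(y)\bigr|\le\delta\|\mu\|$ directly, giving the sharp $\tfrac12\delta\|\mu\|$. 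Your derivation of (\ref{ahm4}) from this and the sum estimate is then correct. For (\ref{ahm4'}), your argument has a real gap: with $K=0$ the estimates (\ref{ahm3})--(\ref{ahm4}) bound only the \emph{double} integrals, and evaluating (\ref{ahm1}) at $y=z_i$ and averaging merely reproduces the sum estimate with the $2Kf(x)$ term already zero --- no information on $f$ results. The missing layer, supplied in the paper, is to substitute $x\mapsto xs$, $y\mapsto yk$ in (\ref{ahm1}) and integrate in $s,k$; the two triple integrals are then controlled by (\ref{ahm3}) and (\ref{ahm4}) (applied at $x\sigma(y)$ and $xy$), forcing $h(x)h(y)$ bounded where $h(x):=\int f(xs)\,d\mu(s)$. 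Only once $h$ is bounded does (\ref{ahm1}), whose left side is $h(xy)+h(x\sigma(y))-2f(x)f(y)$, yield $f$ bounded and the contradiction.
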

 \begin{proof} Equation (3.2): Replacing $y$ by $\sigma(y)$ in (\ref{ahm1}) and subtracting resulting inequalities we find after using the triangle inequality that
 $|f(x)(f(y)-f(\sigma(y)))|\leq
 2\delta$. Since $f$ is assumed to be unbounded then
 $f(\sigma(y))=f(y)$ for all $y\in S.$\\
Equation (3.2): By replacing $x$ by $\sigma(s)$ in (\ref{ahm1}) and
integrating the result obtained with respect  to $s$ we get
 $$|\int_{S}\int_{S}f(\sigma(s)yt)d\mu(t)d\mu(s)+\int_{S}\int_{S}f(\sigma(s)\sigma(y)t)d\mu(t)d\mu(s)-2f(y)\int_{S}
 f(\sigma(s))d\mu(s)|$$$$\leq\delta\|\mu\|,$$ which can be written
 $$|\int_{S}\int_{S}f(\sigma(s)yt)d\mu(t)d\mu(s)+\int_{S}\int_{S}f(\sigma(s)yt)d\mu(t)d\mu(s)-2f(y)\int_{S}
 f(s)d\mu(s)|\leq\delta\|\mu\|,$$
 because $f\circ\sigma=f$.  This proves (\ref{ahm3}).\\ Equation (3.4): By setting $y=s$ in
 (\ref{ahm1}) and integrating the result obtained with respect to $s$ we get
 $$|\int_{S}\int_{S}f(xst)d\mu(t)d\mu(s)+\int_{S}\int_{S}f(x\sigma(s)t)d\mu(t)d\mu(s)-2f(x)\int_{S}
f(s)d\mu(s)|\leq\delta\|\mu\|.$$ According to  (\ref{ahm3}) and the
triangle inequality we deduce (\ref{ahm4}).
 \\Equation (3.5): Assume that $f$ is an unbounded function which satisfies the
 inequality
 (\ref{ahm1}) and that $\int_{S}f(t)d\mu(t)=0$. Replacing $x$ by
 $xs$, $y$ by $yk$ in  (\ref{ahm1}) and integrating the result obtained with respect
 to $s$ and $k$ we get
\begin{equation}\label{ahm12}
|\int_{S}\int_{S}\int_{S}f(xsykt)d\mu(t)d\mu(s)d\mu(k)+\int_{S}\int_{S}\int_{S}f(xs\sigma(yk)t)d\mu(t)d\mu(s)d\mu(k)\end{equation}
$$-2\int_{S}f(xs)d\mu(s)\int_{S}f(yt)d\mu(s)|\leq\delta\|\mu\|^{2}.$$
In view of (\ref{ahm3}) and (\ref{ahm4}) we have
$$|\int_{S}\int_{S}\int_{S}f(xs\sigma(t)\sigma(y)k)d\mu(t)d\mu(s)d\mu(k)$$$$-\int_{S}[\int_{S}f(t)d\mu(t)f(xs\sigma(y))]d\mu(s)|\leq\frac{\delta}{2}\|\mu\|^{2},$$
$$|\int_{S}\int_{S}\int_{S}f(xsytk)d\mu(t)d\mu(s)d\mu(k)-
\int_{S}f(t)d\mu(t)\int_{S}f(xys)d\mu(s)|\leq\frac{3\delta}{2}\|\mu\|^{2}.$$
Since $\int_{S}f(t)d\mu(t)=0$, then we get
$$|\int_{S}\int_{S}\int_{S}f(xs\sigma(t)\sigma(y)k)d\mu(t)d\mu(s)d\mu(k)|\leq\frac{\delta}{2}\|\mu\|^{2},$$
$$|\int_{S}\int_{S}\int_{S}f(xsytk)d\mu(t)d\mu(s)d\mu(k)|\leq\frac{3\delta}{2}\|\mu\|^{2}.$$
From (\ref{ahm12}) we conclude that the function
$h(x)=\int_{S}f(xs)d\mu(s)$ is a bounded function  on $S$, in
particular the functions
$(x,y)\longrightarrow$$\int_{S}f(xys)d\mu(s)$;
$(x,y)\longrightarrow$$\int_{S}f(x\sigma(y)s)d\mu(s)$ are bounded on
$S\times S$. So, from (\ref{ahm1}) we deduce that $f$ is a bounded
function, which contradict the assumption that $f$ is an unbounded
function on $S$ and this proves (\ref{ahm4'}). \\
Equation (3.7): In the following we will show that the function $g$
defined by (\ref{ahm5}) is unbounded. If $g$ is bounded, then there
exists $M> 0$ such that $|\int_{S}f(xs)d\mu(s)|\leq M$ for all $x\in
S$. From (\ref{ahm1}) and the triangle inequality we get that the
function $(x,y)\longrightarrow f(x)f(y)$ is bounded on $S\times S$
and this implies that $f$ is bounded. This contradict the fact that
$f$ is assumed to be unbounded on $S$.\\ From the inequalities
(\ref{ahm1}), (\ref{ahm3}) and (\ref{ahm4}), we get
$$(\int_{S}f(s)d\mu(s))^{2}[g(xy)+g(x\sigma(y))-2g(x)g(y)]$$
$$=\int_{S}f(s)d\mu(s)\int_{S}f(xyt)d\mu(t)+\int_{S}f(s)d\mu(s)\int_{S}f(x\sigma(y)t)d\mu(t)$$
$$-2\int_{S}f(xk)d\mu(k)\int_{S}f(ys)d\mu(s)$$
$$=\int_{S}f(s)d\mu(s)\int_{S}f(xyt)d\mu(t)-\int_{S}\int_{S}\int_{S}f(xytks)d\mu(t)d\mu(s)d\mu(k)$$
$$+\int_{S}f(s)d\mu(s)\int_{S}f(x\sigma(y)t)d\mu(t)-\int_{S}\int_{S}\int_{S}f(x\sigma(y)t\sigma(s)k)d\mu(t)d\mu(s)d\mu(k)$$
$$+\int_{S}\int_{S}\int_{S}f(xk\sigma(ys)t)d\mu(t)d\mu(s)d\mu(k)+\int_{S}\int_{S}\int_{S}f(xkyst)d\mu(t)d\mu(s)d\mu(k)$$
$$-2\int_{S}f(xk)d\mu(k)\int_{S}f(ys)d\mu(s)$$
$$\leq\frac{3\delta}{2}\|\mu\|^{2}+\frac{\delta}{2}\|\mu\|^{2}+\delta\|\mu\|^{2}.$$
Which gives (\ref{ahm6}). \\Equation (3.8): For all $x\in S$, we
have
$$\int_{S}g(xs)d\mu(s)-g(x)\int_{S}g(t)d\mu(t)$$$$=\frac{\int_{S}\int_{S}f(xst)d\mu(s)d\mu(t)}{\int_{S}f(s)d\mu(s)}$$$$-\frac{\int_{S}\int_{S}f(ks)d\mu(k)d\mu(s)
\int_{S}f(xs)d\mu(s)}{(\int_{S}f(s)d\mu(s))^{2}}$$
$$=\frac{\int_{S}\int_{S}f(xst)d\mu(s)d\mu(t)\int_{S}f(s)d\mu(s)-\int_{S}\int_{S}f(ks)d\mu(k)d\mu(s)
\int_{S}f(xs)d\mu(s)}{(\int_{S}f(s)d\mu(s))^{2}}.$$ Replacing $x$ by
$xsk$ and $y$ by $r$ in (\ref{ahm1}) and integrating  the result
obtained with respect to $s$, $k$ and $r$ we get
\begin{equation}\label{ahm20}
 |\int_{S}\int_{S}\int_{S}\int_{S}f(xskrt)d\mu(s)d\mu(k)d\mu(r)d\mu(t)+\int_{S}\int_{S}\int_{S}\int_{S}f(xsk\sigma(r)t)d\mu(s)d\mu(k)d\mu(r)d\mu(t)-
 \end{equation}$$2\int_{S}\int_{S}f(xsk)d\mu(s)d\mu(k)\int_{S}f(r)d\mu(r)|\leq
 \delta \|\mu\|^{3}.$$
By replacing $x$ by $xs$ and $y$ by $kr$ in (\ref{ahm1}) and
integrating the result obtained with respect to $s$, $k$ and $r$ we
get \begin{equation}\label{ahm21}
 |\int_{S}\int_{S}\int_{S}\int_{S}f(xskrt)d\mu(s)d\mu(k)d\mu(r)d\mu(t)+\int_{S}\int_{S}\int_{S}\int_{S}f(xs\sigma(k)\sigma(r)t)d\mu(s)d\mu(k)d\mu(r)d\mu(t)-
 \end{equation}$$2\int_{S}\int_{S}f(kr)d\mu(k)d\mu(r)\int_{S}f(xs)d\mu(s)|\leq
 \delta \|\mu\|^{3}.
$$Since
$$2\int_{S}\int_{S}f(ks)d\mu(k)d\mu(s)
\int_{S}f(xs)d\mu(s)-2\int_{S}f(xst)d\mu(s)d\mu(t)\int_{S}f(s)d\mu(s)$$
$$=[2\int_{S}\int_{S}f(ks)d\mu(k)d\mu(s)
\int_{S}f(xs)d\mu(s)-\int_{S}\int_{S}\int_{S}\int_{S}f(xsrkt)d\mu(k)d\mu(s)d\mu(r)d\mu(t)$$
$$-\int_{S}\int_{S}\int_{S}\int_{S}f(xsr\sigma(k)\sigma(t))d\mu(k)d\mu(s)d\mu(r)d\mu(t)]$$
$$-[2\int_{S}f(xst)d\mu(s)d\mu(t)\int_{S}f(s)d\mu(s)-\int_{S}\int_{S}\int_{S}\int_{S}f(xsrkt)d\mu(k)d\mu(s)d\mu(r)d\mu(t)$$
$$-\int_{S}\int_{S}\int_{S}\int_{S}f(xsrk\sigma(t))d\mu(k)d\mu(s)d\mu(r)d\mu(t)]$$
$$+\int_{S}\int_{S}\int_{S}\int_{S}f(xsr\sigma(k)\sigma(t))d\mu(k)d\mu(s)d\mu(r)d\mu(t)$$
$$-\int_{S}\int_{S}f(x\sigma(t)s)d\mu(s)d\mu(t)\int_{S}f(t)d\mu(t)$$
$$+\int_{S}\int_{S}f(x\sigma(t)s)d\mu(s)d\mu(t)\int_{S}f(t)d\mu(t)-f(x)(\int_{S}f(t)d\mu(t))^{2}$$
$$-[\int_{S}\int_{S}\int_{S}f(xsrk\sigma(t))d\mu(k)d\mu(s)d\mu(r)d\mu(t)-\int_{S}f(xst)d\mu(s)d\mu(t)\int_{S}f(s)d\mu(s)]$$
$$-[\int_{S}f(xst)d\mu(s)d\mu(t)\int_{S}f(s)d\mu(s)-f(x)(\int_{S}f(t)d\mu(t))^{2}].$$
From inequalities (\ref{ahm1}), (\ref{ahm2}), (\ref{ahm3}) and the
above relations we get
$$2\int_{S}\int_{S}f(ks)d\mu(k)d\mu(s)
\int_{S}f(xs)d\mu(s)-2\int_{S}f(xst)d\mu(s)d\mu(t)\int_{S}f(s)d\mu(s)$$
$$\leq \delta \|\mu\|^{3}+\delta \|\mu\|^{3}+\frac{\delta}{2} \|\mu\|^{3}+\frac{\delta}{2} \|\mu\|\int_{S}f(s)d\mu(s)|$$
$$+\frac{\delta}{2} \|\mu\|^{2}+\frac{3\delta}{2}
\|\mu\||\int_{S}f(s)d\mu(s)|.$$Which implies that
$$|\int_{S}g(xt)d\mu(t)-g(x)\int_{S}g(t)d\mu(t)|\leq
 \frac{(5/4)\delta\|\mu\|^{3}+(1/4)\delta\|\mu\|^{2}}{(|\int_{S}f(s)d\mu(s)|)^{2}}+\frac{\delta\|\mu\|}{|\int_{S}f(s)d\mu(s)|}$$ and this
 proves (\ref{ahm7}). Now, since $g$ is unbounded and satisfies the
 inequality (\ref{ahm6}) so, from \cite{BELAID}, we deduce that $g$
 satisfies the d'Alembert's functional equation (\ref{eq6000}). We will
 show that $\int_{S}g(xt)d\mu(t)=g(x)\int_{S}g(t)d\mu(t)$ for all $x\in S.$
  $$2|g(y)||\int_{S}g(xt)d\mu(t)-g(x)\int_{S}g(t)d\mu(t)|=|\int_{S}2g(y)g(xt)d\mu(t)-2g(x)g(y)\int_{S}g(t)d\mu(t)|$$
 $$=|\int_{S}[g(xyt)+g(x\sigma(y)t)]d\mu(t)-\int_{S}g(t)d\mu(t)[g(xy)+g(x\sigma(y))]$$
 $$=|\int_{S}[g(xyt)d\mu(t)-\int_{S}g(t)d\mu(t)g(xy)+\int_{S}[g(x\sigma(y)t)d\mu(t)-\int_{S}g(t)d\mu(t)g(x\sigma(y))|$$
 $$\leq |\int_{S}[g(xyt)d\mu(t)-g(xy)\int_{S}g(t)d\mu(t)|+|\int_{S}[g(x\sigma(y)t)d\mu(t)-g(x\sigma(y))\int_{S}g(t)d\mu(t)|$$In
 view of inequality (\ref{ahm7}) we obtain
$$2|g(y)||\int_{S}g(xt)d\mu(t)-g(x)\int_{S}g(t)d\mu(t)|$$$$\leq
 2[\frac{(5/4)\delta\|\mu\|^{3}+(1/4)\delta\|\mu\|^{2}}{(|\int_{S}f(s)d\mu(s)|)^{2}}+\frac{\delta\|\mu\|}{|\int_{S}f(s)d\mu(s)|}]$$
 Since $g$ is an unbounded function on $S$ then we get
 $\int_{S}g(xt)d\mu(t)=g(x)\int_{S}g(t)d\mu(t)$ for all $x\in S.$
This completes the proof.
\end{proof} Now, we are ready to prove the main result
of the present section. We notice here that same  result has been
obtained  in \cite{BELAID} with other assumptions on $\mu$.
\begin{thm} Let $\sigma$ be an involutive morphism   of $S$. Let $\mu$ be a complex
measure that is a linear combination of Dirac measures
$(\delta_{z_{i}})_{i\in I}$, such that  $z_{i}$ is contained in the
center of $S$ for all $i\in I$.  Let $\delta>0$ be fixed.  If $f:
S\longrightarrow \mathbb{C}$ satisfies the inequality
\begin{equation}\label{ahmed}
 |\int_{S}f(xyt)d\mu(t)+\int_{S}f(x\sigma(y)t)d\mu(t)-2f(x)f(y)|\leq \delta
\end{equation} for all $x,y\in S$.
Then either $f$ is bounded and
$|f(x)|\leq\frac{\|\mu\|+\sqrt{\|\mu\|^{2}+2\delta}}{2}$  for all
$x\in S$ or $f$ is a solution of the integral Kannappan's functional
equation (\ref{eq02000}).
\end{thm}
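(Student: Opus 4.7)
The plan is a dichotomy according to whether $f$ is bounded or unbounded on $S$.

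\emph{The bounded case.} I would set $M=\sup_{x\in S}|f(x)|$ and estimate each of the two integrals on the left of the hypothesis (3.12) crudely by $M\|\mu\|$. The triangle inequality then gives $2|f(x)||f(y)|\le\delta+2M\|\mu\|$ for all $x,y\in S$; taking the supremum over both variables yields the quadratic inequality $2M^{2}-2M\|\mu\|-\delta\le 0$, and solving it produces exactly the bound $M\le(\|\mu\|+\sqrt{\|\mu\|^{2}+2\delta})/2$ claimed in the theorem.

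\emph{The unbounded case.} Here the goal is to show that $f$ is already an exact solution of the integral Kannappan equation (1.9). I would apply Lemma 3.1 to obtain $g:=T^{-1}f\in\mathcal{A}$; that is, $g$ is an unbounded solution of d'Alembert's equation (1.7) with $\int_{S}g(t)\,d\mu(t)\ne 0$ satisfying the invariance condition (2.5). Theorem 2.2 then guarantees that $F:=Tg=\bigl(\int_{S}g(t)\,d\mu(t)\bigr)\,g$ is an exact non-zero solution of (1.9). The remaining task is to identify $f$ with $F$.

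To obtain $f=F$, I would first show that $f-F$ is bounded. From the definition $g(x)=\int_{S}f(xt)\,d\mu(t)/\int_{S}f(t)\,d\mu(t)$ combined with property (2.5) for $g$, a direct computation produces
$$\int_{S}\int_{S}f(xst)\,d\mu(s)\,d\mu(t)=g(x)\Bigl(\int_{S}g(t)\,d\mu(t)\Bigr)\int_{S}f(r)\,d\mu(r)=F(x)\int_{S}f(r)\,d\mu(r).$$
Inserting this into inequality (3.4) of Lemma 3.1 and using (3.5) to divide by $|\int_{S}f(t)\,d\mu(t)|>0$ yields $|f(x)-F(x)|\le (3\delta\|\mu\|/2)/|\int_{S}f(t)\,d\mu(t)|$ for every $x\in S$, so $h:=f-F$ is uniformly bounded on $S$. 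Since $f$ is unbounded, $F$ must be unbounded as well. Substituting $f=F+h$ into the hypothesis (3.12) and using that $F$ satisfies (1.9) exactly, the $F$-only piece cancels and what remains is
$$\Bigl|\int_{S}h(xyt)\,d\mu(t)+\int_{S}h(x\sigma(y)t)\,d\mu(t)-2F(x)h(y)-2h(x)F(y)-2h(x)h(y)\Bigr|\le\delta$$
for all $x,y\in S$. For each fixed $y$, every term on the left except $-2F(x)h(y)$ is a bounded function of $x$, while $F$ is unbounded; this forces $h(y)=0$ for every $y\in S$, so $f=F$ is an exact solution.

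The main obstacle is precisely the last step: once the exact solution $F$ is constructed from $f$, one still has to promote the \emph{boundedness} of $f-F$ to the \emph{equality} $f=F$. The leverage is that $F$ inherits the unboundedness of $f$, so any nonzero value $h(y_{0})$ would create a term unbounded in $x$ in the approximate identity, contradicting its uniform $\delta$-bound.
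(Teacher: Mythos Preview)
Your argument is correct; both the bounded case and the unbounded case go through as you describe. The paper, however, takes a different route in the unbounded case. After establishing via Lemma~3.1 that $g\in\mathcal{A}$, the paper does \emph{not} invoke Theorem~2.2 to build the exact solution $F=Tg$. Instead it derives from (3.3)--(3.5) the approximate Wilson inequality
\[
\bigl|f(xy)+f(x\sigma(y))-2f(x)g(y)\bigr|\le \frac{3\delta\|\mu\|}{|\int_S f(s)\,d\mu(s)|},
\]
appeals to an external superstability result for Wilson's equation (reference~[6]) to conclude that $f(xy)+f(x\sigma(y))=2f(x)g(y)$ holds exactly, and then, using $f\circ\sigma=f$ and integrating, shows that $g$ is a scalar multiple of $f$. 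A direct computation then gives $\int_S f(xyt)\,d\mu(t)+\int_S f(x\sigma(y)t)\,d\mu(t)=2\beta f(x)f(y)$ for a constant $\beta$, and the original inequality together with the unboundedness of $f$ forces $\beta=1$.

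Your approach is more self-contained: it stays entirely within the machinery of the present paper (Lemma~3.1 and Theorem~2.2) and avoids the external Wilson superstability theorem, at the cost of the small extra step of proving $f-F$ bounded. The paper's approach, on the other hand, makes the link with Wilson's equation explicit, which is thematically relevant since that connection is advertised in the introduction. Both arguments ultimately exploit the same leverage---an exact identity combined with the unboundedness of $f$ (or $F$) to kill an error term---but yours applies it to the difference $h=f-F$, while the paper applies it to the scalar discrepancy $\beta-1$.
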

\begin{proof} Assume that $f$ is an unbounded solution of (\ref{ahmed}).  Replacing $y$ by  $s$ in (\ref{ahmed})
and integrating the result obtained with respect to $s$ we get
\begin{equation}\label{ahmed-elqo2}
 |\int_{S}\int_{S}f(xyst)d\mu(s)d\mu(t)+\int_{S}\int_{S}f(x\sigma(y)\sigma(s)t)d\mu(s)d\mu(t)-2f(x)\int_{S}f(ys)d\mu(s)|\leq
 \delta \|\mu\|
\end{equation} for all $y\in S$. From (3.3), (3.4) and the triangle
inequality we get \begin{equation}\label{ahmed-elqo3}
 |\int_{S}f(s)d\mu(s)f(xy)+\int_{S}f(s)d\mu(s)f(x\sigma(y))-2f(x)\int_{S}f(ys)d\mu(s)|\leq
 3\delta \|\mu\|
\end{equation} for all $x,y\in S$. Since from (3.5) we have $\int_{S}f(s)d\mu(s)\neq0$. Then the inequality (\ref {ahmed-elqo3}) can be written as follows
\begin{equation}\label{ahmed-elqo4}
 |f(xy)+f(x\sigma(y))-2f(x)g(y)|\leq
 \frac{3\delta \|\mu\|}{|\int_{S}f(s)d\mu(s)|}
\end{equation} for all $x,y\in S$ and where $g$ is the function
defined in Lemma 3.1. Now, by using same computation used in [6,
Theorem 2.2(iii)]  we conclude that $f,g$ are solutions of Wilson's
functional equation
\begin{equation}\label{wilson}
f(xy)+f(x\sigma(y))=2f(x)g(y)\end{equation}for all $x,y \in S.$ By
replacing $x$ by $t$ in (\ref{wilson}) and integrating the result
obtained with respect to $t$ we get
$\int_{S}f(ty)d\mu(t)+\int_{S}f(t\sigma(y))d\mu(t)=2g(y)\int_{S}f(t)d\mu(t)$.
Since $f\circ\sigma=f$ then we get
$$\int_{S}f(ty)d\mu(t)+\int_{S}f(t\sigma(y))d\mu(t)$$$$=\int_{S}f(yt)d\mu(t)+\int_{S}f(y\sigma(t))d\mu(t)=2f(y)\int_{S}g(t)d\mu(t).$$
Then we have $f(y)\int_{S}g(t)d\mu(t)=g(y)\int_{S}f(t)d\mu(t)$. So,
$g=\frac{\int_{S}g(t)d\mu(t)}{\int_{S}f(t)d\mu(t)}f$.\\  For all
$x,y\in S$ we have
\begin{equation}\label{akk1}
\int_{S}f(xyt)d\mu(t)+\int_{S}f(x\sigma(y)t)d\mu(t)\end{equation}
$$=\int_{S}[f(xty)+f(xt\sigma(y)]d\mu(t)=2\int_{S}f(xt)d\mu(t)g(y)=2\beta
f(x)f(y),$$ where
$\beta=\frac{(\int_{S}g(t)d\mu(t))^{2}}{\int_{S}f(t)d\mu(t)}$.
Substituting this into (3.1) we obtain \\$|2(\beta
-1)f(y)f(x)|\leq\delta$ for all $x,y\in S.$ Since $f$ is assumed to
be unbounded then we deduce that $\beta=1$  and then from
(\ref{akk1}) we deduce that $f$ is a solution of (\ref{eq02000}).
This completes the proof.\end{proof} \subsection{Superstability of
the integral Kannappan's functional equation (\ref{eq02000}) on
Monoids} If $S$ is a monoid (A semigroup with identity element $e$)
then by elementary computations we verify that the superstability of
the integral Kannappan's functional equation follows from the
superstability of d'Alembert's functional equation
(1.7).\begin{prop} Let $M$ be a topological monoid. Let $\sigma$ be
an involutive anti-automorphism of $M$ and let $\mu$ a complex
measure with compact support. Let $\delta>0$ be fixed.  If a
continuous function $f: M\longrightarrow \mathbb{C}$ satisfies the
inequality
\begin{equation}\label{man1}
 |\int_{M}f(xyt)d\mu(t)+\int_{M}f(x\sigma(y)t)d\mu(t)-2f(x)f(y)|\leq \delta
\end{equation} for all $x,y\in M$.
Then either $f$ is bounded  or $f$ is a solution of the integral
Kannappan's functional equation (\ref{eq02000}).\end{prop}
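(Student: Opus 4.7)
The bounded case is trivial, so the plan is to assume $f$ unbounded and show it satisfies (\ref{eq02000}) exactly. Write $F(x):=\int_M f(xt)\,d\mu(t)$, $F'(x):=\int_M f(x\sigma(t))\,d\mu(t)$, $\nu:=\int_M f\,d\mu$, and note that $\sigma(e)=e$ on a monoid. The first step mirrors the opening of Lemma~3.1: replacing $y$ by $\sigma(y)$ in (\ref{man1}) and subtracting gives $f\circ\sigma=f$, while setting $y=e$ yields $|F(x)-f(e)f(x)|\leq\delta/2$. Substituting this estimate back into (\ref{man1}) produces $|f(e)[f(xy)+f(x\sigma(y))]-2f(x)f(y)|\leq 2\delta$; the case $f(e)=0$ would force $f$ to be bounded, so $f(e)\neq 0$, and dividing gives a d'Alembert stability inequality with constant $K:=2/f(e)$. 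By the superstability of d'Alembert's equation (\cite{BELAID}), the unboundedness of $f$ then forces the exact equation
\[ (\dagger)\qquad f(xy)+f(x\sigma(y)) = K f(x) f(y) \qquad (x,y\in M). \]

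The heart of the plan is to upgrade $(\dagger)$ to the integral Kannappan equation. I would apply $(\dagger)$ in three ways --- with $y$ replaced by $yt$, with $y$ replaced by $\sigma(y) t$, and with $x$ replaced by $x\sigma(t)$ --- then integrate each of the resulting identities against $d\mu(t)$, using $\sigma(yt)=\sigma(t)\sigma(y)$ and $\sigma(\sigma(y)t)=\sigma(t)y$. The auxiliary integral $\int_M f(x\sigma(t)y)\,d\mu(t)$ then appears in two of the three integrated equations and can be eliminated; combining with the relation $F+F'=K\nu f$ (obtained by integrating $(\dagger)$ at $y=t$) yields the exact identity
\[F(xy)+F(x\sigma(y)) = Kf(x)[F(y)+F(\sigma(y))] - K^2\nu f(x)f(y) + Kf(y)F(x).\]
The hard part here is the careful bookkeeping of the anti-automorphism on triple products so that the correct cancellations occur.

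To close, set $Q:=F-f(e)f$ (so $|Q|\leq\delta/2$). Substituting the identity above together with $(\dagger)$ into (\ref{man1}), and using $f\circ\sigma=f$, leads to
\[\bigl|[4-K^2\nu] f(x)f(y) + Kf(x)[Q(y)+Q(\sigma(y))] + Kf(y)Q(x)\bigr| \leq \delta.\]
Fixing $y$ and letting $|f(x)|\to\infty$ forces the coefficient of $f(x)$ to vanish, so $Q(y)+Q(\sigma(y)) = \tfrac{K^2\nu-4}{K} f(y)$; since the left side is bounded by $\delta$ while $f$ is unbounded, one must have $\nu=f(e)^2$ and $Q(y)+Q(\sigma(y))\equiv 0$. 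The remaining inequality $|Kf(y)Q(x)|\leq\delta$ combined with the unboundedness of $f$ in $y$ then gives $Q\equiv 0$, i.e.\ $F=f(e)f$; substituting into $(\dagger)$ yields $F(xy)+F(x\sigma(y)) = f(e)\cdot K f(x) f(y) = 2 f(x) f(y)$, which is (\ref{eq02000}).
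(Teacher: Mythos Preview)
Your proof is correct and reaches the conclusion by a genuinely different route from the paper's. Both arguments begin identically: set $y=e$ to get $|F(x)-f(e)f(x)|\leq\delta/2$, deduce $f(e)\neq 0$, and invoke a superstability result to obtain the exact Wilson/d'Alembert equation $f(xy)+f(x\sigma(y))=\tfrac{2}{f(e)}f(x)f(y)$. From there the paper takes a shortcut: it cites Stetk\ae r's structural result that solutions of Wilson's equation with an anti-automorphism are central, uses $f(xyt)=f(txy)$ to rewrite the Kannappan left side as $\tfrac{2}{f(e)}f(y)F(x)$, and then a single unboundedness argument yields $F=f(e)f$. You instead avoid centrality altogether: integrating $(\dagger)$ in three ways (with $y\to yt$, $y\to\sigma(y)t$, $x\to x\sigma(t)$) and eliminating the cross term $\int f(x\sigma(t)y)\,d\mu(t)$ gives an explicit formula for $F(xy)+F(x\sigma(y))$, after which two successive unboundedness arguments force first $\nu=f(e)^2$ and then $Q\equiv 0$. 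Your approach is more computational but entirely self-contained---it does not rely on the centrality theorem from \cite{07}, which is a nontrivial external input. The paper's proof is shorter precisely because it outsources that step.
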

\begin{proof}Let $f$ be an unbouded contious function which satisfies (\ref{man1}). Taking $y=e$ in (\ref{man1}) we get
\begin{equation}\label{man2}
 |\int_{M}f(xt)d\mu(t)-f(e)f(x)|\leq \frac{\delta}{2}
\end{equation} for all $x\in M$. Since $f$ is unbounded then
$f(e)\neq0$, because if $f(e)=0$ the functions $(x,y)\longmapsto
\int_{M}f(xyt)d\mu(t)$; $(x,y)\longmapsto
\int_{M}f(x\sigma(y)t)d\mu(t)$ are bounded and from (\ref{man1}) and
the triangle inequality we get $f$ a bounded function on $S$. This
contradict the assumption that $f$ is unbounded. Now, From
(\ref{man1}), (\ref{man2}) and the triangle inequality we obtain
\begin{equation}\label{man3}
|f(e)f(xy)+f(e)f(x\sigma(y))-2f(x)f(y)|\end{equation}
$$\leq|f(e)f(xy)-\int_{M}f(xyt)d\mu(t)|+|f(e)f(x\sigma(y))-\int_{M}f(x\sigma(y)t)d\mu(t)|$$
$$+|\int_{M}f(xyt)d\mu(t)+\int_{M}f(x\sigma(y)t)d\mu(t)-2f(x)f(y)|\leq\frac{\delta}{2}+\frac{\delta}{2}+\delta=2\delta.$$
Inequality which can be written as follows
\begin{equation}\label{man4}
|f(xy)+f(x\sigma(y))-2f(x)\frac{f(y)}{f(e)}|\leq\frac{2\delta}{|f(e)|},\;
x,y\in M.\end{equation}From [6, Theorem 2.2(iii)]  we deduce  that
$f,g$ are solutions of Wilson's functional equation
\begin{equation}\label{man4}
f(xy)+f(x\sigma(y))=2f(x)\frac{f(y)}{f(e)}
\end{equation} for all $x,y\in M$, then from \cite{07} $f$ is central.
So,
$$\int_{M}f(xyt)d\mu(t)+\int_{M}f(x\sigma(y)t)d\mu(t)=\int_{M}f(txy)d\mu(t)+\int_{M}f(tx\sigma(y))d\mu(t)$$
$$=\int_{M}[f(tx(y))+f((tx)\sigma(y))]d\mu(t)=2\frac{f(y)}{f(e)}\int_{M}f(tx)d\mu(t)=2\frac{f(y)}{f(e)}\int_{M}f(xt)d\mu(t).$$
Substituting this into (\ref{man1}) after computation we get
$|f(y)(f(x)-\frac{\int_{M}f(xt)d\mu(t)}{f(e)})|\leq\frac{\delta}{2}$
for all $x,y\in M.$ Since $f$ is unbounded then
$f(x)=\frac{\int_{M}f(xt)d\mu(t)}{f(e)}$ for all $x\in S.$ Thus, for
all $x,y\in M$ we get
$$\int_{M}f(xyt)d\mu(t)+\int_{M}f(x\sigma(y)t)d\mu(t)=2\frac{f(y)}{f(e)}\int_{M}f(xt)d\mu(t)=2f(x)f(y).$$  That is $f$ satisfies the integral
Kannappan's functional equation (\ref{eq02000}). This completes the
proof.
\end{proof}
\section{Solutions of the
functional equation (\ref{eq030000})} The solutions of the
functional equation (\ref{eq030000}) with $\sigma$ an involutive
anti-automorphism  are explicitly obtained  by Elqorachi
\cite{Elqorachi} on semigroups not necessarily abelian in terms of
multiplicative functions. In this section we express the solutions
of (\ref{eq030000}) where $\sigma$ is an involutive automorphism  in
terms of multiplicative functions. The following lemma is obtained
in \cite{Elqorachi} for the case where $\sigma$ is an involutive
anti-automorphism. It still holds for the case where $\sigma$ is an
involutive automorphism.\begin{lem} Let $\sigma$: $S\longrightarrow
S$ be a morphism of $S$ . Let $\mu$ be a complex measure that is a
linear combination of Dirac measures $(\delta_{z_{i}})_{i\in I}$,
such that $z_{i}$ is contained in the center of $S$ for all $i\in
I$. Let $f$ be a non-zero solution of equation (\ref{eq030000}).
Then for all $x\in S$ we have
\begin{equation}\label{eq-ah77}
    f(x)=-f(\sigma(x)),
\end{equation}
\begin{equation}\label{eq-ah88}
    \int_{S}f(t)d\mu(t)\neq 0,
\end{equation}
\begin{equation}\label{eq-ah99'}
    \int_{S}\int_{S}f(ts)d\mu(t)d\mu(s)=\int_{S}\int_{S}f(\sigma(t)s)d\mu(t)d\mu(s)=0,
\end{equation}
\begin{equation}\label{eq-ah99}
    \int_{S}\int_{S}f(x\sigma(t)s)d\mu(t)d\mu(s)=f(x)\int_{S}f(t)d\mu(t),
\end{equation}
\begin{equation}\label{eq-ah100}
    \int_{S}\int_{S}f(xts)d\mu(t)d\mu(s)=-f(x)\int_{S}f(t)d\mu(t),
\end{equation}
\begin{equation}\label{eq-ah111}
   \int_{S}f(\sigma(x)t)d\mu(t)=\int_{S}f(xt)d\mu(t).
\end{equation}
The function defined by
$$g(x)\;:=\frac{\int_{S}f(xt)d\mu(t)}{\int_{S}f(s)d\mu(s)}\;for \;x\in S$$ is a non-zero
solution of d'Alembert's functional equation (\ref{eq6000}).
Furthermore,
$$\int_{S}\int_{S}g(ts)d\mu(t)d\mu(s)\neq
0;\;\;\int_{S}g(s)d\mu(s)=0.$$ That is $Jf=g\in \mathcal{B}$, where
$J$ and $\mathcal{B}$ are the function and the set defined in
Theorem 4.2.
 \end{lem}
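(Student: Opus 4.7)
The plan is to mirror, step by step, the proof of the corresponding lemma in \cite{Elqorachi} (which treats the involutive anti-automorphism case) and to verify that every computation remains valid when the rule $\sigma(xy)=\sigma(x)\sigma(y)$ replaces $\sigma(y)\sigma(x)$. Because each $z_{i}$ (and hence each $\sigma(z_{i})$) lies in the center of $S$, every rearrangement that moves an integration variable past a non-integrated factor still goes through, and the distinction between automorphism and anti-automorphism matters only for products of integration variables among themselves, where it can be absorbed by relabelling.

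I would organize the argument as follows. First, (\ref{eq-ah77}) follows by substituting $y\mapsto\sigma(y)$ in (\ref{eq030000}), using $\sigma^{2}=\mathrm{id}$ to exchange the two integrals on the left, and adding the resulting identity to the original equation to obtain $2f(x)(f(y)+f(\sigma(y)))=0$; since $f\not\equiv 0$, this gives $f\circ\sigma=-f$. Next, for (\ref{eq-ah99}) and (\ref{eq-ah100}), I would iterate (\ref{eq030000}) by substituting $y=s$ (resp.\ $x\mapsto xs$) and integrating against $d\mu(s)$, obtaining a system of relations between the double integrals $I_{1}(x):=\iint f(x\sigma(s)t)\,d\mu(s)d\mu(t)$ and $I_{2}(x):=\iint f(xst)\,d\mu(s)d\mu(t)$; solving the system, with (\ref{eq-ah77}) used to convert $\int f(\sigma(s))\,d\mu(s)$ into $-\int f(s)\,d\mu(s)$, delivers (\ref{eq-ah99}) and (\ref{eq-ah100}). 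The non-vanishing (\ref{eq-ah88}) then follows by contradiction: if $\int f\,d\mu=0$ these identities force $I_{1}\equiv I_{2}\equiv 0$, and substituting $x\mapsto xs$, $y\mapsto yk$ in (\ref{eq030000}) and integrating twice expresses $f(x)f(y)$ as a combination of vanishing double integrals, forcing $f\equiv 0$.

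The vanishing relations (\ref{eq-ah99'}) follow cleanly from (\ref{eq-ah77}), the automorphism rule, and centrality: writing the double integral as $\sum c_{i}c_{j}f(\sigma(z_{i})z_{j})$, applying $f\circ\sigma=-f$ together with the automorphism rule, swapping the indices $i\leftrightarrow j$, and using centrality of each $z_{k}$ shows the sum equals its own negative; the same manipulation handles $\iint f(ts)\,d\mu d\mu$. Identity (\ref{eq-ah111}) comes from (\ref{eq-ah77}) and centrality combined with (\ref{eq-ah99}) and (\ref{eq-ah100}). Finally, to recognise $g$ as a d'Alembert solution I would derive a Wilson-type intermediate identity $f(xy)-f(x\sigma(y))=2g(x)f(y)$ by substituting $x\mapsto xs$ in (\ref{eq030000}), integrating in $d\mu(s)$, and evaluating the resulting integrals via (\ref{eq-ah99}) and (\ref{eq-ah100}); symmetrising this in $y$ and $\sigma(y)$ via (\ref{eq-ah77}) then yields (\ref{eq6000}) for $g$, and the remaining assertions on $g$ follow by substituting the definition of $g$ into (\ref{eq-ah99'}) and (\ref{eq-ah88}).

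The main obstacle is obtaining two independent equations in the pair $(I_{1},I_{2})$: many natural substitutions in (\ref{eq030000}) collapse to the single relation $I_{1}(x)-I_{2}(x)=2f(x)\int f\,d\mu$ modulo (\ref{eq-ah77}), so the companion identity that pins down $I_{1}$ and $I_{2}$ separately requires a carefully chosen second substitution (for instance, replacing $x$ by $xs$ with a simultaneous $y\mapsto y\sigma(k)$ and integrating in both variables). Once this system is resolved, the remaining identities and the d'Alembert reduction for $g$ are bookkeeping with central variables and repeated use of $f\circ\sigma=-f$.
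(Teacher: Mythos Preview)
Your overall plan---mirror the computations of \cite{Elqorachi} and check that they survive the change from anti-automorphism to automorphism---is exactly what the paper does (it gives no fresh proof and simply asserts that the lemma ``still holds''). Two points in your outline need correction, however.

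First, your symmetry argument for (\ref{eq-ah99'}) works only for the mixed integral $\iint f(\sigma(t)s)\,d\mu\,d\mu$; it fails for $\iint f(ts)\,d\mu\,d\mu$. Applying $f\circ\sigma=-f$ and the automorphism rule to $f(z_iz_j)$ gives $-f(\sigma(z_i)\sigma(z_j))$, and after the index swap and centrality you land on $\iint f(ts)=-\iint f(\sigma(t)\sigma(s))$, not on $-\iint f(ts)$; since $\mu$ is not assumed $\sigma$-invariant this is not a tautology. The route the paper actually uses (visible in the proof of Lemma~5.1, equation (\ref{elq6000}), which is the same computation with $\delta=0$) is to \emph{first} establish that $g$ satisfies d'Alembert's equation, then add (\ref{eq-ah99}) and (\ref{eq-ah100}) to get $\int_S[g(xs)+g(x\sigma(s))]\,d\mu(s)=0$, hence $2g(x)\int g\,d\mu=0$ and so $\int g\,d\mu=0$, i.e.\ $\iint f(ts)=0$. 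The same reordering is needed for (\ref{eq-ah111}): via $f\circ\sigma=-f$ it is equivalent to $\int f(xt)\,d\mu(t)+\int f(x\sigma(t))\,d\mu(t)=0$, which follows from the Wilson identity for $(f,g)$ together with $\int g\,d\mu=0$, not from (\ref{eq-ah99})--(\ref{eq-ah100}) alone.

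Second, the ``main obstacle'' you flag is not one. A second independent relation for $(I_1,I_2)$ drops out of the substitution $x\mapsto\sigma(s)$ (rather than $x\mapsto xs$) followed by integration in $s$: using centrality of $\sigma(s)$ and $f\circ\sigma=-f$, both integrals on the left become $-I_1(y)$ and the right becomes $-2f(y)\int f\,d\mu$, yielding $I_1(y)=f(y)\int f\,d\mu$ directly; then $I_2=I_1-2f\int f\,d\mu=-f\int f\,d\mu$. This is exactly the move in the paper's proof of (\ref{elq3}) in Lemma~5.1, and no double substitution is required.
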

 \begin{thm} Let $\sigma$: $S\longrightarrow S$ be an involutive
morphism of $S$. Let $\mu$ be a complex measure that is a linear
combination of Dirac measures $(\delta_{z_{i}})_{i\in I}$, such that
  $z_{i}$ is contained in the center of $S$ for all $i\in I$.  Let
$\mathcal{B}$ consists of the solution  $g:$ $S\longrightarrow
\mathbb{C}$ of d'Alembert's functional equation (\ref{eq6000}) such
 that $\int_{S}g(t)d\mu(t)=0$ and
$\int_{S}\int_{S}g(st)d\mu(s)d\mu(t)\neq0$.  Let $\mathcal{V}$
consist of the non-zero solutions of the extension of Van Vleck's
functional equation (\ref{eq030000}). Then
 the function $J$: $\mathcal{V}\longrightarrow \mathcal{B} $
 defined by
\begin{equation}\label{V11}
Jf(x)=\frac{\int_{S}f(xt)d\mu(t)}{\int_{S}f(t)d\mu(t)},\; x\in
S\end{equation} is a bijection of $\mathcal{V}$ onto $\mathcal{B}$.
In particular $ J(\mathcal{V})=\mathcal{B}.$
\end{thm}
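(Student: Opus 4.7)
The plan is to establish the bijectivity of $J$ by exhibiting an explicit inverse. Lemma 4.1 already shows $J(\mathcal{V})\subseteq\mathcal{B}$, and its identities in fact pin down the inverse. Indeed, for $f\in\mathcal{V}$ with $g:=Jf$, Fubini applied to the defining relation $\int_{S}f(xs)\,d\mu(s)=g(x)\int_{S}f(t)\,d\mu(t)$ gives
\[
\int_{S}\int_{S}f(xts)\,d\mu(t)\,d\mu(s)=\int_{S}f(t)\,d\mu(t)\cdot\int_{S}g(xt)\,d\mu(t),
\]
while identity (\ref{eq-ah100}) of Lemma 4.1 evaluates the same integral as $-f(x)\int_{S}f(t)\,d\mu(t)$. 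Since $\int_{S}f(t)\,d\mu(t)\neq 0$, we obtain $f(x)=-\int_{S}g(xt)\,d\mu(t)$. This proves injectivity and prescribes the candidate inverse $K\colon\mathcal{B}\to\mathbb{C}^{S}$ by $Kg(x):=-\int_{S}g(xt)\,d\mu(t)$.

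For surjectivity, fix $g\in\mathcal{B}$ and let $f:=Kg$. Then $\int_{S}f(t)\,d\mu(t)=-\int_{S}\int_{S}g(ts)\,d\mu(s)\,d\mu(t)\neq 0$ by the defining condition on $\mathcal{B}$, so in particular $f\neq 0$. To check that $f$ satisfies (\ref{eq030000}), I expand $2f(x)f(y)=2\int_{S}\int_{S}g(xt)g(ys)\,d\mu(t)\,d\mu(s)$ using d'Alembert's identity $2g(a)g(b)=g(ab)+g(a\sigma(b))$ with $a=xt$, $b=ys$; after using centrality of the support of $\mu$ to commute $t,s$ and the automorphism property $\sigma(ys)=\sigma(y)\sigma(s)$, this becomes
\[
2f(x)f(y)=\int_{S}\int_{S}\bigl[g(xyts)+g(x\sigma(y)t\sigma(s))\bigr]\,d\mu(t)\,d\mu(s).
\]
On the other hand, $\int_{S}f(x\sigma(y)t)\,d\mu(t)-\int_{S}f(xyt)\,d\mu(t)=\int_{S}\int_{S}[g(xyts)-g(x\sigma(y)ts)]\,d\mu(t)\,d\mu(s)$, so the two sides agree iff $\int_{S}\int_{S}[g(x\sigma(y)ts)+g(x\sigma(y)t\sigma(s))]\,d\mu(t)\,d\mu(s)=0$, which follows from d'Alembert applied with $u=x\sigma(y)t$, $v=s$, integrated over $s$ and using $\int_{S}g(s)\,d\mu(s)=0$.

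It remains to verify $Jf=g$, that is, $\int_{S}\int_{S}g(xts)\,d\mu(t)\,d\mu(s)=g(x)\int_{S}\int_{S}g(ts)\,d\mu(t)\,d\mu(s)$. D'Alembert with $u=x$, $v=ts$ already gives
\[
\int_{S}\int_{S}g(xts)\,d\mu(t)\,d\mu(s)+\int_{S}\int_{S}g(x\sigma(t)\sigma(s))\,d\mu(t)\,d\mu(s)=2g(x)\int_{S}\int_{S}g(ts)\,d\mu(t)\,d\mu(s),
\]
so the whole claim reduces to proving the symmetry $\int_{S}\int_{S}g(x\sigma(t)\sigma(s))\,d\mu(t)\,d\mu(s)=\int_{S}\int_{S}g(xts)\,d\mu(t)\,d\mu(s)$. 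This is the main technical obstacle. I obtain it by chaining two further applications of d'Alembert: with $u=xs$, $v=t$, invoking centrality to commute, and integrating, one deduces via $\int_{S}g\,d\mu=0$ that $\int_{S}\int_{S}g(x\sigma(t)s)\,d\mu(t)\,d\mu(s)=-\int_{S}\int_{S}g(xts)\,d\mu(t)\,d\mu(s)$; with $u=x\sigma(t)$, $v=s$ one analogously gets $\int_{S}\int_{S}g(x\sigma(t)\sigma(s))\,d\mu(t)\,d\mu(s)=-\int_{S}\int_{S}g(x\sigma(t)s)\,d\mu(t)\,d\mu(s)$. Combining the two identities yields the required symmetry, and hence $Jf=g$, completing the proof that $J$ is a bijection of $\mathcal{V}$ onto $\mathcal{B}$.
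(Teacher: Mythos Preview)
Your proof is correct and follows the same overall architecture as the paper's: Lemma~4.1 gives $J(\mathcal{V})\subseteq\mathcal{B}$, the candidate inverse is $Kg(x)=-\int_S g(xt)\,d\mu(t)$ (equivalently $\int_S g(x\sigma(t))\,d\mu(t)$, as the paper writes it), and one checks $Kg\in\mathcal{V}$ and $J(Kg)=g$.

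Two local differences are worth noting. For injectivity, you extract the recovery formula $f(x)=-\int_S (Jf)(xt)\,d\mu(t)$ directly from identity~(\ref{eq-ah100}); this is cleaner than the paper's argument, which instead compares two solutions $f_1,f_2$ with $Jf_1=Jf_2$ by manipulating the Van Vleck equation for each. Your route has the advantage of exhibiting $J^{-1}$ explicitly at the outset. For the verification $J(Kg)=g$, the paper takes the form $f(x)=\int_S g(x\sigma(s))\,d\mu(s)$, applies d'Alembert with second argument $t\sigma(s)$, and then observes that the two terms in the numerator coincide by the Fubini relabeling $t\leftrightarrow s$ (together with centrality); this avoids your two-step chain of sign flips. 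Both arguments are short, but the paper's relabeling trick is a touch more direct than your symmetry reduction.
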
\begin{proof} From  Lemma 4.1 the formula (\ref{V11}) makes sense, and  we have $g := Jf \in \mathcal{B}$ for any $f\in \mathcal{V}.$\\
-Injection: Let $f_1$ and $f_2$ be two non-zero solutions of
(\ref{eq030000}). If $Jf_1=Jf_2$ then we get
 \begin{equation}\label{VAN}
 \int_{S}f_2(t)d\mu(t)\int_{S}f_1(xt)d\mu(t)
 =\int_{S}f_1(t)d\mu(t)\int_{S}f_2(xt)d\mu(t)\end{equation} for all
 $x\in S.$ Since
 $f_1$ and $f_2$ are solutions of (\ref{eq030000}) then we have
 \begin{equation}\label{VAn1}
 \int_{S}f_1(x\sigma(y)t)d\mu(t)-\int_{S}f_1(xyt)d\mu(t)=2f_1(x)f_1(y)\end{equation}
and
 \begin{equation}\label{VAn2}
 \int_{S}f_2(x\sigma(y)t)d\mu(t)-\int_{S}f_2(xyt)d\mu(t)=2f_2(x)f_2(y).\end{equation}
 By multiplying (\ref{VAn1}) by $\int_{S}f_2(t)d\mu(t)$ and using (\ref{VAN}) we
 get
 \begin{equation}\label{ELQ}
 2f_1(x)f_1(y)\int_{S}f_2(t)d\mu(t)=2f_2(x)f_2(y)\int_{S}f_1(t)d\mu(t).\end{equation}By
 replacing $y$ by $s$ in (\ref{ELQ}) and integrating the result
 obtained with respect to $s$ we get
 $2f_1(x)\int_{S}f_1(s)d\mu(s)\int_{S}f_2(t)d\mu(t)=2f_2(x)\int_{S}f_2(s)d\mu(s)\int_{S}f_1(t)d\mu(t)$.
 Since $\int_{S}f_2(s)d\mu(s)\int_{S}f_1(t)d\mu(t)\neq 0$, then
 $f_1=f_2$.
 \\Surjection: Let $g\in \mathcal{B}$. First we notice that since $g$ is a solution of (\ref{eq6000}) and $\int_{S}g(s)d\mu(s)=0$ then if we let $y=s$ in  (\ref{eq6000}) and integrating
  the result
 obtained with respect to $s$ we deduce that  $\int_{S}g(x\sigma(s))d\mu(s)=-\int_{S}g(xs)d\mu(s)$.  We may define
$f:$ $S\longrightarrow \mathbb{C}$ by
$$f(x)=\frac{1}{2}(\int_{S}g(x\sigma(s))d\mu(s)-\int_{S}g(xs)d\mu(s))=\int_{S}g(x\sigma(s))d\mu(s)=-\int_{S}g(xs)d\mu(s).$$For
all $x,y\in S$ we have $$
\int_{S}f(x\sigma(y)t)d\mu(t)-\int_{S}f(xyt)d\mu(t)=\int_{S}\int_{S}g(x\sigma(y)t\sigma(s))d\mu(t)d\mu(s)-\int_{S}\int_{S}g(xyt\sigma(s))d\mu(t)d\mu(s)$$
$$=\int_{S}\int_{S}g(xt\sigma(ys))d\mu(t)d\mu(s)+\int_{S}\int_{S}g(xtys)d\mu(t)d\mu(s)=2\int_{S}g(xt)d\mu(t)\int_{S}g(ys)d\mu(s)=2f(x)f(y).$$
Furthermore,
$$\int_{S}f(s)d\mu(s)=\int_{S}\int_{S}g(s\sigma(t))d\mu(s)d\mu(t)=-\int_{S}\int_{S}g(st)d\mu(s)d\mu(t)\neq0.$$
Thus, we get $f\neq 0$. On the other hand for all $x\in S$ we have
$$Jf(x)=\frac{\int_{S}f(xt)d\mu(t)}{\int_{S}f(t)d\mu(t)}$$
$$=\frac{\int_{S}\int_{S}g(xt\sigma(s))d\mu(t)d\mu(s)}{\int_{S}\int_{S}g(t\sigma(s))d\mu(t)d\mu(s)}$$
$$\frac{\int_{S}\int_{S}g(xt\sigma(s))d\mu(t)d\mu(s)+\int_{S}\int_{S}g(xt\sigma(s))d\mu(t)d\mu(s)}{2\int_{S}\int_{S}g(t\sigma(s))d\mu(t)d\mu(s)}$$
$$=\frac{2g(x)\int_{S}\int_{S}g(t\sigma(s))d\mu(t)d\mu(s)}{2\int_{S}\int_{S}g(t\sigma(s))d\mu(t)d\mu(s)}=g(x).$$
This completes the proof. \end{proof} In \cite{Elqorachi} we use
[33, Proposition 8.14] to derive  the form of the solutions of
(\ref{eq030000}) where $\sigma$ is an involutive anti-automorphism
of $S$. This reasoning no longer works for the present situation. We
will use an elementary  approach which  works for both situations.
\begin{thm} Let $\sigma$: $S\longrightarrow S$ be   a morphism
 of $S$. Let $\mu$ be a complex-measure  that is a linear combination of Dirac measures
$(\delta_{z_{i}})_{i\in I}$, such that   $z_{i}$ is contained in the
center of $S$ for all $i\in I$. The non-zero central solutions of
the integral Van Vleck's functional equation (\ref{eq030000})  are
the functions of the form
$$f=\frac{\chi- \chi\circ\sigma}{2}\int_{S}\chi(\sigma(t))d\mu(t),
$$ where $\chi$ : $S\longrightarrow \mathbb{C}$ is a multiplicative
function such that $\int_{S}\chi(t)d\mu(t)\neq 0$ and
$\int_{S}\chi(\sigma(t)d\mu(t))=-\int_{S}\chi(t)d\mu(t)$.
\\Furthermore, if $S$ is a topological semigroup and that $\sigma$ :
$S\longrightarrow S$ is continuous, then the non-zero solution $f$
of equation (\ref{eq030000}) is continuous, if and only if $\chi$ is
continuous.
\end{thm}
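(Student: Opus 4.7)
The plan is to combine Theorem 4.2 with the known classification of central solutions of d'Alembert's functional equation. Given a non-zero central solution $f$ of (\ref{eq030000}), Theorem 4.2 associates to it the element $g = Jf \in \mathcal{B}$, so $g$ solves (\ref{eq6000}), satisfies $\int_S g(t)d\mu(t)=0$, and $\int_S\int_S g(st)d\mu(s)d\mu(t)\neq 0$. I would first verify that $g$ inherits centrality from $f$: since each $z_i$ in the support of $\mu$ lies in the center of $S$, the identity $f(xyt)=f((yt)x)=f(ytx)=f(yxt)$ holds by pushing $x$ past $yt$ using centrality of $f$ and then using $tx=xt$, so $g(xy)=g(yx)$. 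Applying the characterization \cite{stetkaer} of central solutions of d'Alembert's equation for an involutive morphism $\sigma$, I obtain a multiplicative $\chi: S\to\mathbb{C}$ with $g=\tfrac{\chi+\chi\circ\sigma}{2}$.

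Next I would recover the explicit form of $f$. The surjection portion of the proof of Theorem 4.2 yields $f(x)=\int_S g(x\sigma(s))d\mu(s)$. Expanding with $g=\tfrac{\chi+\chi\circ\sigma}{2}$ and using that $\sigma$ is an involutive morphism (so $\sigma(x\sigma(s))$ equals either $\sigma(x)s$ or $s\sigma(x)$, both giving $\chi(\sigma(x))\chi(s)$ under $\chi$), this splits as
\[
f(x)=\tfrac{1}{2}\chi(x)\int_S\chi(\sigma(s))d\mu(s)+\tfrac{1}{2}\chi(\sigma(x))\int_S\chi(s)d\mu(s).
\]
The condition $\int_S g(t)d\mu(t)=0$ translates into $\int_S\chi(s)d\mu(s)=-\int_S\chi(\sigma(s))d\mu(s)$; substituting collapses the expression to $f=\tfrac{\chi-\chi\circ\sigma}{2}\int_S\chi(\sigma(t))d\mu(t)$. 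The non-vanishing condition $\int_S\chi(t)d\mu(t)\neq 0$ comes from $\int_S\int_S g(st)d\mu(s)d\mu(t)\neq 0$, which after multiplicative factorization forces $\int_S\chi(t)d\mu(t)\neq 0$ (equivalently $\int_S\chi(\sigma(t))d\mu(t)\neq 0$).

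For the converse direction, I would substitute the candidate $f$ into (\ref{eq030000}), expand the four multiplicative terms in $\int_S f(x\sigma(y)t)d\mu(t)-\int_S f(xyt)d\mu(t)$, and use $\int_S\chi(\sigma(t))d\mu(t)=-\int_S\chi(t)d\mu(t)$ to recognise the result as $2f(x)f(y)$; the non-triviality $\chi\neq\chi\circ\sigma$ is automatic from $\int_S\chi(t)d\mu(t)\neq 0$, and centrality of $f$ follows from multiplicativity of $\chi$. The continuity statement is immediate: continuity of $f$ (and $\sigma$) forces $g=Jf$ continuous via its defining formula, from which the continuity of $\chi$ is the standard separation argument for multiplicative factors in d'Alembert-type decompositions. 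The main technical point I expect to require care is the transfer of centrality from $f$ to $g$ and matching the hypotheses of the cited result \cite{stetkaer} in the automorphism (rather than anti-automorphism) case; the rest is essentially a direct calculation on top of Theorem 4.2, so the proof avoids the use of \cite[Proposition 8.14]{stetkaer} that was needed in \cite{Elqorachi} for the anti-automorphism case.
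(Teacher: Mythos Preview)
Your approach is correct for the theorem as stated, but it differs from the paper's proof in a meaningful way. You use the centrality hypothesis on $f$ explicitly: you transfer centrality from $f$ to $g=Jf$ and then invoke the classification \cite{stetkaer} of central d'Alembert solutions. The paper, by contrast, never uses the centrality assumption on $f$. Instead it substitutes $x\mapsto xs$ (respectively $y\mapsto ys$) into (\ref{eq030000}), integrates in $s$, and combines with (\ref{eq-ah100}) from Lemma 4.1 to obtain the two identities
\[
f(xy)-f(x\sigma(y))=2f(y)g(x),\qquad f(xy)+f(x\sigma(y))=2f(x)g(y),
\]
whose sum is the sine addition law $f(xy)=f(x)g(y)+f(y)g(x)$. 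The structure theory of the sine addition law (\cite[Lemma 3.4]{ebanks}, \cite[Theorem 4.1]{07}) then forces $g$ to be abelian without any hypothesis on $f$. Once $g=\tfrac{\chi+\chi\circ\sigma}{2}$, the remaining computations are the same as yours.

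The upshot is that the paper's argument actually proves more than the theorem states: \emph{every} non-zero solution of (\ref{eq030000}) is of the claimed form, hence automatically central. This is precisely what feeds into Corollary 4.4. Your argument is cleaner if one is content with the centrality hypothesis as literally stated, and it makes efficient use of the bijection in Theorem 4.2; the paper's route buys the elimination of that hypothesis and the consequent application to abelian d'Alembert solutions.
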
\begin{proof} Let $f$ be a non-zero solution of
(\ref{eq030000}). Replacing $x$ by $xs$ in (\ref{eq030000}) and
integrating the result obtained with respect to $s$ we get
\begin{equation}\label{haj1}
\int_{S}\int_{S}f(x\sigma(y)st)d\mu(s)d\mu(t)-\int_{S}\int_{S}f(xyst)d\mu(s)d\mu(t)=2f(y)\int_{S}f(xs)d\mu(s).\end{equation}
By using  (\ref{eq-ah100}) equation (\ref{haj1}) can be written as
follows
\begin{equation}\label{haj2}
-f(x\sigma(y))+f(xy)=2f(y)g(x),\;x,y\in S,\end{equation}  where $g$
is the function defined in Lemma 4.1. If we replace $y$ by $ys$ in
(\ref{eq030000}) and integrate the result obtained with respect to
$s$ we get
\begin{equation}\label{haj3}
\int_{S}\int_{S}f(x\sigma(y)\sigma(s)t)d\mu(s)d\mu(t)-\int_{S}\int_{S}f(xyst)d\mu(s)d\mu(t)=2f(x)\int_{S}f(ys)d\mu(s).\end{equation}
By using  (\ref{eq-ah100}) we obtain that
\begin{equation}\label{haj4}
f(x\sigma(y))+f(xy)=2f(x)g(y),\;x,y\in S\end{equation}. By adding
(\ref{haj4}) and (\ref{haj2}) we get that the pair $f,g$ satisfies
the sine addition law
$$f(xy)=f(x)g(y)+f(y)g(x)\; \text{for all }\; x,y\in S.$$ Now, in
view of [12, Lemma 3.4], [33, Theorem 4.1]    $g$ is abelian. Since
$g$ is a non-zero solution of d'Alembert's functional equation (1.7)
there exists a non-zero multiplicative function $\chi$:
$S\longrightarrow \mathbb{C}$ such that
$g=\frac{\chi+\chi\circ\sigma}{2}$. The rest of the proof is similar
to the one used in  \cite{Elqorachi}. This completes the
proof.\end{proof}\begin{cor}Let $S$ be a semigroup, let $\sigma$ be
an involutive automorphism of $S$. Let $g$ be a solution of
d'Alembert's functional equation (\ref{eq6000}). If there exists a
complex measure $\mu$ that is a linear combination of Dirac measures
$(\delta_{z_{i}})_{i\in I}$, such that  $z_{i}$ is contained in the
center of $S$ for all $i\in I$ and $\int_{S}g(t)d\mu(t)=0$;
$\int_{S}\int_{S}g(ts)d\mu(t)d\mu(t)\neq0$. Then there exists a
non-zero multiplicative function $\chi$: $S\longrightarrow
\mathbb{C}$ such that
$g=\frac{\chi+\chi\circ\sigma}{2}$.\end{cor}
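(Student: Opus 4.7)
The plan is to reduce Corollary 4.4 to the sine-addition-law computation already carried out inside the proof of Theorem 4.3, routing through the bijection $J$ constructed in Theorem 4.2.

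First I would invoke the surjectivity half of Theorem 4.2. The hypotheses on $g$ say exactly that $g\in\mathcal{B}$, so there exists a non-zero $f\in\mathcal{V}$ with $Jf=g$. In fact the construction in Theorem 4.2 produces such an $f$ explicitly, namely $f(x)=\int_{S}g(x\sigma(s))\,d\mu(s)=-\int_{S}g(xs)\,d\mu(s)$, and one has $g(x)=\int_{S}f(xt)\,d\mu(t)/\int_{S}f(t)\,d\mu(t)$ for every $x\in S$.

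Next I would repeat the two one-variable integrations that open the proof of Theorem 4.3. Replacing $x$ by $xs$ in (\ref{eq030000}) and integrating against $\mu(s)$, then applying (\ref{eq-ah100}) from Lemma 4.1 to the two double integrals on the left, yields $-f(x\sigma(y))+f(xy)=2f(y)g(x)$. Symmetrically, replacing $y$ by $ys$, integrating against $\mu(s)$, and using (\ref{eq-ah99}) and (\ref{eq-ah100}) (here it matters that $\sigma$ is an automorphism, so $\sigma(ys)=\sigma(y)\sigma(s)$), yields $f(x\sigma(y))+f(xy)=2f(x)g(y)$. Adding these two identities gives the sine addition law $f(xy)=f(x)g(y)+f(y)g(x)$ for all $x,y\in S$. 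Crucially, none of this step assumes $f$ central, so this portion of the Theorem 4.3 argument transfers verbatim.

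With the sine addition law in hand and $f\neq 0$, I would invoke [12, Lemma 3.4] and [33, Theorem 4.1], exactly as in the proof of Theorem 4.3, to conclude that $g$ is abelian. The classification of abelian solutions of d'Alembert's equation (\ref{eq6000}) with $\sigma$ an involutive automorphism then produces a non-zero multiplicative $\chi\colon S\to\mathbb{C}$ with $g=(\chi+\chi\circ\sigma)/2$, which is the required conclusion. The only delicate point is checking that the sine-addition-law derivation within the proof of Theorem 4.3 really uses nothing beyond $f\in\mathcal{V}$ and the identities of Lemma 4.1, so that the centrality assumption made in Theorem 4.3 is not silently used along the way; once that is verified, the corollary follows at once from Theorem 4.2 and the classical classification, with everything else reducing to bookkeeping on $J$.
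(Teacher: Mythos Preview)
Your proposal is correct and follows essentially the same route as the paper's own proof: invoke the surjectivity of $J$ from Theorem~4.2 to obtain $f\in\mathcal{V}$ with $Jf=g$, then use the sine-addition-law computation in the proof of Theorem~4.3 (which, as you rightly observe, nowhere uses centrality of $f$) together with [12, Lemma~3.4] and [33, Theorem~4.1] to conclude that $g$ is abelian and hence of the form $(\chi+\chi\circ\sigma)/2$. The paper's proof is simply a terser version of yours, saying only ``From the proof of Theorem~4.3, we get that $g$ is an abelian solution''; your explicit check that the relevant portion of that proof is independent of the centrality hypothesis is a worthwhile clarification.
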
\begin{proof} Let $g:$
$S\longrightarrow \mathbb{C}$ be a non-zero function which satisfies
the conditions of Corollary 4.4.  From Theorem 4.2 there exists a
non-zero solution of the integral Van Vleck's functional equation
(\ref{eq030000}) such that $Tf=g$. From the proof of Theorem 4.3, we
get that $g$ is an abelian solution of d'Alembert's functional
equation (\ref{eq6000}). That is there exists a non-zero
multiplicative function $\chi$: $S\longrightarrow \mathbb{C}$ such
that $g=\frac{\chi+\chi\circ\sigma}{2}$. This completes the proof.
\end{proof}
\section{The superstability of the integral Van Vleck's functional equation (\ref{eq030000})} In the present section we
prove the superstability theorem  of the integral Van Vleck's
functional equation (\ref{eq030000}) on semigroups. First, we prove
he following useful lemma.
\begin{lem} Let  et $\sigma$ be an involutive morphism  of $S$. Let $\mu$ be a complex
measure that is a linear combination of Dirac measures
$(\delta_{z_{i}})_{i\in I}$, such that  $z_{i}$ is contained in the
center of $S$ for all $i\in I$. Let $\delta>0$ be fixed.  If $f:
S\longrightarrow \mathbb{C}$ is an unbounded function which
satisfies the inequality
\begin{equation}\label{elq1}
 |\int_{S}f(x\sigma(y)t)d\mu(t)-\int_{S}f(xyt)d\mu(t)-2f(x)f(y)|\leq \delta
\end{equation} for all $x,y\in S$. Then, for all $x\in S$
\begin{equation}\label{elq2}
    f(\sigma(x))=-f(x),
\end{equation}
\begin{equation}\label{elq3}
|\int_{S}\int_{S}f(x\sigma(s)t)d\mu(s)d\mu(t)-f(x)\int_{S}f(t)d\mu(t)|\leq
\frac{\delta}{2}\|\mu\|,
\end{equation}
\begin{equation}\label{elq4}
|\int_{S}\int_{S}f(xst)d\mu(s)d\mu(t)+f(x)\int_{S}f(t)d\mu(t)|\leq
\frac{3\delta\|\mu\|}{2},
\end{equation}
\begin{equation}\label{elq5}
\int_{S}f(t)d\mu(t)\neq 0,
\end{equation}
\begin{equation}\label{elq6000}
 \int_{S}\int_{S}f(st)d\mu(s)d\mu(t)=0,\end{equation}
 \begin{equation}\label{elq7000}
 |\int_{S}f(xs)d\mu(s)-\int_{S}f(\sigma(x)s)d\mu(s)|\leq
 \frac{4\delta\|\mu\|^{2}}{|\int_{S}f(s)d\mu(s)|}.
\end{equation}
The function $g$ defined by
\begin{equation}\label{elq8}g(x)=
\frac{\int_{S}f(xt)d\mu(t)}{\int_{S}f(t)d\mu(t)}\;\text{for}\;x\in
S\end{equation} is unbounded on $S$ and satisfies the following
inequality
\begin{equation}\label{elq9}
 |g(xy)+g(x\sigma(y))-2g(x)g(y)|\leq
 \frac{3\delta\|\mu\|^{2}}{(|\int_{S}f(s)d\mu(s))^{2}|}\;\text{for all}\;x,y\in S.
\end{equation} Furthermore, \\1) $\int_{S}g(t)d\mu(t)=0$; $\int_{S}\int_{S}g(ts)d\mu(t)d\mu(t)\neq0$ \\2)   $g$ is an abelian solution of
d'Alembert's functional equation (\ref{eq6000}) and $Jf=g\in
\mathcal{B}$.\\3)  $f,g$ are solutions of Wilson's functional
equation
\begin{equation}\label{wilson}
f(xy)+f(x\sigma(y))=2f(x)g(y)\end{equation}for all $x,y \in S.$
\end{lem}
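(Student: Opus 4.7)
The plan is to follow the scheme of Lemma 3.1 with a systematic sign change reflecting that (5.1) is a Van Vleck-type inequality: whereas in the Kannappan case the identity $f\circ\sigma=+f$ was extracted, here we will get $f\circ\sigma=-f$, and the double-integral relations (5.3), (5.4) will differ from their analogues (3.3), (3.4) precisely by the resulting sign on $f(x)\int f\,d\mu$. Most manipulations from Lemma 3.1 then transpose verbatim.

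First I would prove (5.2): replace $y$ by $\sigma(y)$ in (5.1); since $\sigma$ is involutive this swaps the two integrals, and adding the resulting inequality to (5.1) produces $|2f(x)(f(y)+f(\sigma(y)))|\leq 2\delta$, so unboundedness forces $f(\sigma(y))=-f(y)$. For (5.3) I would substitute $y=\sigma(s)$ into (5.1) (or equivalently $y=s$), integrate over $s$, and use (5.2) to replace $\int f(\sigma(s))d\mu(s)$ by $-\int f(s)d\mu(s)$; the same substitution with $y=s$ combined via the triangle inequality separates out the estimate for $B:=\int\!\int f(x\sigma(s)t)d\mu(s)d\mu(t)$ alone, and then $A:=\int\!\int f(xst)d\mu(s)d\mu(t)$ is handled through the $y=s$ integrated inequality, giving (5.4) with the crucial $+$ sign. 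Identity (5.5) is established by contradiction, exactly in the style of the proof of (3.5): assuming $\int f\,d\mu=0$, iterated integrations of (5.1) (after $x\mapsto xs$, $y\mapsto yk$) combined with (5.3)-(5.4) force $\int f(xs)d\mu(s)$ to be bounded, and then (5.1) itself forces $f$ bounded. For (5.6), integrating over both $s,t$ the inequality obtained from $y=s$ and invoking (5.5) and the symmetry under $\sigma$ (using (5.2)) cancels out the $f(x)\int f\,d\mu$ term. Inequality (5.7) is derived by comparing (5.1) with its version in $\sigma(x)$, again via (5.2).

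For the function $g$ defined by (5.8), unboundedness follows by the usual argument: if $|g|\leq M$ then $|\int f(xt)d\mu(t)|$ is bounded in $x$, substituting into (5.1) makes $f(x)f(y)$ bounded on $S\times S$, hence $f$ bounded -- contradiction. To get (5.9), I would expand $(\int f\,d\mu)^{2}[g(xy)+g(x\sigma(y))-2g(x)g(y)]$ in the style of the long display preceding (3.7), but now using the Van Vleck signs of (5.3)-(5.4) so that the cross terms combine to give the stated bound $3\delta\|\mu\|^{2}/|\int f\,d\mu|^{2}$.

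With (5.9) in hand and $g$ unbounded, the Badora-type superstability result cited as \cite{BELAID} upgrades the approximate equation to the exact d'Alembert equation (1.7) for $g$, and then $\int g\,d\mu=0$ together with $\int\!\int g\,d\mu\,d\mu\neq 0$ follow from (5.5)-(5.6) and the definition (5.8); hence $Jf=g\in\mathcal{B}$. To see that $g$ is abelian, I would mimic the derivation in the proof of Theorem 4.3: starting from (5.1), the substitutions $x\mapsto xs$ and $y\mapsto ys$ combined with (5.3) and (5.4) yield the sine addition law for the pair $(f,g)$, and then [12, Lemma 3.4] together with [33, Theorem 4.1] forces $g$ to be abelian. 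Finally, to establish Wilson's equation (5.10), the same integration of (5.1) (with $y=s$) and the estimates (5.3), (5.4) give an approximate Wilson equation $|f(xy)+f(x\sigma(y))-2f(x)g(y)|\leq K\delta$; unboundedness of $f$ then turns it into an exact identity by the standard double-substitution argument of [6, Theorem 2.2(iii)].

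The principal obstacle is purely bookkeeping: keeping every sign straight across four- and five-fold iterated integrals in the presence of both $\sigma$ and $-f\circ\sigma=f$, since in the Van Vleck setting several of the cancellations that drove the Kannappan argument become sign-swaps. Once (5.3)-(5.4) are secured with the correct signs, the remaining assertions follow along the lines of Lemma 3.1, Theorem 4.3, and Theorem 3.2.
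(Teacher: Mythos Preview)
Your overall plan mirrors the paper's argument, but two steps need adjustment.

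For (5.3), substituting in the $y$-variable (either $y=s$ or $y=\sigma(s)$) and integrating gives only the single relation $\bigl|\int\!\!\int f(x\sigma(s)t)\,d\mu(s)d\mu(t)-\int\!\!\int f(xst)\,d\mu(s)d\mu(t)-2f(x)\int f\,d\mu\bigr|\le\delta\|\mu\|$; the two substitutions produce the \emph{same} inequality, so the triangle inequality cannot separate $B$ from $A$. The paper instead substitutes $x\mapsto\sigma(s)$: using $f\circ\sigma=-f$, centrality of the $z_i$, and a relabelling of dummy variables, both integral terms collapse to $\pm\int\!\!\int f(y\sigma(s)t)$, yielding $|-2B+2f(y)\int f\,d\mu|\le\delta\|\mu\|$ and hence (5.3) with the sharp constant. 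Then (5.4) follows from (5.3) together with the $y=s$ integrated inequality.

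Your direct route to (5.6) via ``symmetry under $\sigma$'' does not work because $\mu$ is not assumed $\sigma$-invariant, so $\int\!\!\int f(st)\,d\mu(s)d\mu(t)$ and $\int\!\!\int f(\sigma(s)\sigma(t))\,d\mu(s)d\mu(t)$ need not agree. The paper's order is essential here: first establish (5.9), upgrade $g$ to an exact d'Alembert solution via \cite{BELAID}, and then combine (5.3) and (5.4) to bound $\bigl|2g(x)\int g\,d\mu\bigr|$; unboundedness of $g$ forces $\int g\,d\mu=0$, which is (5.6). Inequality (5.7) likewise uses (5.6) (substitute $x\mapsto sk$ in (5.1), integrate, and apply (5.4)), so it too must come after d'Alembert is secured, not from a direct comparison with the $\sigma(x)$-version of (5.1). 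With these two fixes the remainder of your outline (unboundedness of $g$, Wilson via [6, Theorem 2.2(iii)], abelianness via the sine addition law) aligns with the paper.
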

\begin{proof}Equation (5.2): If we replace $y$ by $\sigma(y)$ in (\ref{elq1}) we get
\begin{equation}\label{elq11}
 |\int_{S}f(xyt)d\mu(t)-\int_{S}f(x\sigma(y)t)d\mu(t)-2f(x)f(\sigma(y))|\leq \delta
\end{equation} for all $x,y\in S$. By adding the result of (\ref{elq1})
and (\ref{elq11}) and using the triangle inequality we obtain
$|2f(x)(f(y)+f(\sigma(y)))|\leq2\delta$ for all $x\in S$. Since $f$
is assumed to be unbounded then we get (\ref{elq2}).\\Equation
(5.3):  By replacing $x$ by $\sigma(s)$ in (\ref{elq1}), using
(\ref{elq2}) and integrating the result obtained with respect to $s$
we have
\begin{equation}\label{elq12}
 |-\int_{S}\int_{S}f(y\sigma(s)t)d\mu(s)d\mu(t)-\int_{S}\int_{S}f(y\sigma(s)t)d\mu(s)d\mu(t)+2f(y)\int_{S}f(s)d\mu(s)|\leq \delta\|\mu\|
\end{equation} for all $y\in S.$ This proves  (\ref{elq3}).\\Equation (5.4):
Taking $y=s$ in  (\ref{elq1}) and integrating the result obtained
with respect to $s$ we get
\begin{equation}\label{elq13}
 |\int_{S}\int_{S}f(x\sigma(s)t)d\mu(s)d\mu(t)-\int_{S}\int_{S}f(xst)d\mu(s)d\mu(t)-2f(x)\int_{S}f(s)d\mu(s)|\leq \delta\|\mu\|
\end{equation} for all $x\in S.$
Since
$$|\int_{S}\int_{S}f(xst)d\mu(s)d\mu(t)+f(x)\int_{S}f(s)d\mu(s)|$$
$$=|\int_{S}\int_{S}f(xst)d\mu(s)d\mu(t)+2f(x)\int_{S}f(s)d\mu(s)-\int_{S}\int_{S}f(x\sigma(s)t)d\mu(s)d\mu(t)
$$$$+\int_{S}\int_{S}f(x\sigma(s)t)d\mu(s)d\mu(t)-f(x)\int_{S}f(s)d\mu(s)|.$$
So, by using (\ref{elq3}), (5.12) and the triangle inequality we
deduce (\ref{elq4}).\\Equation (5.5): $f$ is assumed to be an
unbounded solution of the inequality (\ref{elq1}) then $f\neq0$. Now
assume that $\int_{S}f(t)d\mu(t)=0$. By replacing $x$ by $xs$ in
(\ref{elq1}) and integrating the result obtained with respect to $s$
we get
\begin{equation}\label{elq14}
 |\int_{S}\int_{S}f(x\sigma(y)st)d\mu(s)d\mu(t)-\int_{S}\int_{S}f(xyst)d\mu(s)d\mu(t)-2f(y)\int_{S}f(xs)d\mu(s)|\leq \delta\|\mu\|
\end{equation} for all $x,y\in S.$ Since $$2f(y)\int_{S}f(xs)d\mu(s)=2f(y)\int_{S}f(xs)d\mu(s)+\int_{S}\int_{S}f(xyst)d\mu(s)d\mu(t)
-\int_{S}\int_{S}f(x\sigma(y)st)d\mu(s)d\mu(t)$$
$$-(\int_{S}\int_{S}f(xyst)d\mu(s)d\mu(t)+f(xy)\int_{S}f(s)d\mu(s))$$$$+\int_{S}\int_{S}f(x\sigma(y)st)d\mu(s)d\mu(t)
+f(x\sigma(y))\int_{S}f(s)d\mu(s)$$$$+f(xy)\int_{S}f(s)d\mu(s)-f(x\sigma(y))\int_{S}f(s)d\mu(s).$$So,
by using (\ref{elq4}), (\ref{elq1}), $\int_{S}f(t)d\mu(t)=0$ and the
triangle inequality we get $y\longmapsto f(y)\int_{S}f(xs)d\mu(s)$
is a bounded function on $S$, since $f$ is unbounded then we obtain
$\int_{S}f(xs)d\mu(s)=0$ for all $x \in S$. By substituting this
into (\ref{elq1}) we get $f$ a bounded function on $S$ and this
contradict the assumption that $f$ is an unbounded function. So, we
have (\ref{elq5}).\\Equation (5.9):  By using similar computation
used above the function $g$ defined by (\ref{elq8}) is an unbounded
function on $S$. Furthermore,
\begin{equation}\label{elq15}
\int_{S}f(s)d\mu(s)\int_{S}f(k)d\mu(k)[g(xy)+g(x\sigma(y))-2g(x)g(y)]
 \end{equation}
 $$=\int_{S}f(k)d\mu(k)\int_{S}f(xyt)d\mu(t)+\int_{S}f(s)d\mu(s)\int_{S}f(x\sigma(y)t)d\mu(t)-2\int_{S}f(xs)d\mu(s)\int_{S}f(ys)d\mu(s)$$
 $$=\int_{S}[f(xyt)\int_{S}f(k)d\mu(k)+\int_{S}\int_{S}f(xytks)d\mu(k)d\mu(s)]d\mu(t)$$
 $$+\int_{S}[f(x\sigma(y)t)\int_{S}f(s)d\mu(s)-\int_{S}\int_{S}f(x\sigma(y)t\sigma(k)s)d\mu(k)d\mu(s)]d\mu(t)$$
 $$+\int_{S}\int_{S}[\int_{S}f(xs\sigma(yk)t)d\mu(t)-\int_{S}f(xsykst)d\mu(t)-2f(xs)f(yk)]d\mu(k)d\mu(s).$$ So, by using (\ref{elq3}),
 (\ref{elq4}) and (\ref{elq1}) we get
 $$|\int_{S}f(s)d\mu(s)\int_{S}f(k)d\mu(k)[g(xy)+g(x\sigma(y))-2g(x)g(y)]|$$
 $$\leq\frac{3\delta\|\mu\|^{2}}{2}+\frac{\delta\|\mu\|^{2}}{2}+\delta\|\mu\|^{2}
 =3\delta\|\mu\|^{2}.$$ Which proves (\ref{elq9}).
\\Equation (5.6): Since
 $g$ is unbounded so, from \cite{BELAID} $g$ satisfies the d'Alembert's functional
 equation (\ref{eq6000}). From (\ref{elq3}), (\ref{elq4}) and the triangle inequality we have
\begin{equation}\label{elq2000}
|\int_{S}\int_{S}f(x\sigma(s)t)d\mu(s)d\mu(t)+\int_{S}f(xst)d\mu(s)d\mu(t)|\leq
2\delta\|\mu\|,
\end{equation} for all $x,y\in S.$ Since
$g=\frac{\int_{S}f(xk)d\mu(k)}{\int_{S}f(t)d\mu(t)}$ So, the
inequality (\ref{elq2000}) can be written as follows
$$|\int_{S}f(k)d\mu(k)\int_{S}g(x\sigma(k))d\mu(k)+\int_{S}f(k)d\mu(k)\int_{S}g(xk)d\mu(k)|\leq2\delta\|\mu\|.$$
On the other hand $g$ is a solution of d'Alembert's functional
equation (1.9) then we get
$|2g(x)\int_{S}g(k)d\mu(k)|\leq\frac{2\delta\|\mu\|}{|\int_{S}f(k)d\mu(k)|}$
for all $x\in S.$ Since $g$ is unbounded then we deduce that
$\int_{S}g(k)d\mu(k)=0$. That is
$\int_{S}\int_{S}f(st)d\mu(s)d\mu(t)=0$. Which proves
(\ref{elq6000}). \\Equation (5.7): By replacing $x$ by $sk$ in
(\ref{elq1}), integrating the result with respect to $s$ and $k$ and
using (\ref{elq6000}) we obtain
\begin{equation}\label{elq2001}
 |\int_{S}\int_{S}f(\sigma(y)skt)d\mu(s)d\mu(k)d\mu(t)-\int_{S}\int_{S}f(yskt)d\mu(s)d\mu(k)d\mu(t)|\leq
 \delta\|\mu\|^{2}
\end{equation} for all $y\in S.$ Since
$$\int_{S}\int_{S}\int_{S}f(\sigma(y)skt)d\mu(s)d\mu(k)d\mu(t)-\int_{S}\int_{S}\int_{S}f(yskt)d\mu(s)d\mu(k)d\mu(t)$$
$$=\int_{S}\int_{S}\int_{S}f(\sigma(y)skt)d\mu(s)d\mu(k)d\mu(t)+\int_{S}f(\sigma(y)t)d\mu(t)\int_{S}f(s)d\mu(s)$$
$$-(\int_{S}\int_{S}\int_{S}f(yskt)d\mu(s)d\mu(k)d\mu(t)+\int_{S}f(yt)d\mu(t)\int_{S}f(s)d\mu(s))$$
$$-(\int_{S}f(\sigma(y)t)d\mu(t)-\int_{S}f(yt)d\mu(t))\int_{S}f(s)d\mu(s).$$
So from  (\ref{elq2001}), (\ref{elq4}) and the triangle inequality
we get
$$|\int_{S}f(\sigma(y)t)d\mu(t)-\int_{S}f(yt)d\mu(t)|\leq\frac{4\delta\|\mu\|^{2}}{|\int_{S}f(s)d\mu(s)|}.$$
This proves (\ref{elq7000}).\\ Equation (\ref{wilson}) From (5.1),
(5.3), (5.4) and the triangle inequality we get
\begin{equation}\label{hamo1}
 |\int_{S}f(s)d\mu(s)f(xy)+\int_{S}f(s)d\mu(s)f(x\sigma(y))-2f(x)\int_{S}f(ys)d\mu(s)|\end{equation}
$$\leq|\int_{S}f(s)d\mu(s)f(xy)+\int_{S}\int_{S}f(xyst)d\mu(s)d\mu(t)| $$
$$+|\int_{S}f(s)d\mu(s)f(x\sigma(y))-\int_{S}\int_{S}f(x\sigma(y)st)d\mu(s)d\mu(t)|$$
$$+|\int_{S}\int_{S}f(x\sigma(y)st)d\mu(s)d\mu(t)-\int_{S}\int_{S}f(xyst)d\mu(s)d\mu(t)-2f(x)\int_{S}f(ys)d\mu(s)|$$
$$\leq
 3\delta \|\mu\|$$  for all $x,y\in S$. Since from (5.5) we have $\int_{S}f(s)d\mu(s)\neq0$. Then the inequality (\ref {hamo1}) can be written as follows
\begin{equation}\label{hamo3}
 |f(xy)+f(x\sigma(y))-2f(x)g(y)|\leq
 \frac{3\delta \|\mu\|}{|\int_{S}f(s)d\mu(s)|}
\end{equation} for all $x,y\in S$ and where $g$ is the function
defined in Lemma 5.1. Now, by using same computation used in [6,
Theorem 2.2(iii)]  we conclude that $f,g$ are solutions of Wilson's
functional equation (\ref{wilson}). This completes the proof.
\end{proof} Now, we are ready to prove the main result of the
present section.
\begin{thm} Let   $\sigma$ be an involutive morphism   of $S$. Let $\mu$ be a complex
measure that is a linear combination of Dirac measures
$(\delta_{z_{i}})_{i\in I}$, such that  $z_{i}$ is contained in the
center of $S$ for all $i\in I$.  Let $\delta>0$ be fixed.  If $f:
S\longrightarrow \mathbb{C}$ satisfies the inequality
\begin{equation}\label{ahmedred}
 |\int_{S}f(x\sigma(y)t)d\mu(t)-\int_{S}f(xyt)d\mu(t)-2f(x)f(y)|\leq \delta
\end{equation} for all $x,y\in S$.
Then either $f$ is bounded and
$|f(x)|\leq\frac{\|\mu\|+\sqrt{\|\mu\|^{2}+2\delta}}{2}$  for all
$x\in S$ or $f$ is a solution of the integral Van Vleck's functional
equation (\ref{eq030000}).
\end{thm}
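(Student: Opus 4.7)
The plan is to split into two cases according to whether $f$ is bounded or unbounded, with the heavy lifting concentrated in the unbounded case where Lemma~5.1 is available.

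For the bounded case, I would simply set $y = x$ in the hypothesis
\[
 \left|\int_{S}f(x\sigma(x)t)d\mu(t)-\int_{S}f(x^{2}t)d\mu(t)-2f(x)^{2}\right|\leq \delta,
\]
and, writing $M := \sup_{x\in S}|f(x)|$, bound each integral by $M\|\mu\|$ to get $2M^{2} \le 2M\|\mu\| + \delta$. Solving the resulting quadratic inequality in $M$ yields the stated bound $M \le (\|\mu\| + \sqrt{\|\mu\|^{2}+2\delta})/2$.

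For the unbounded case, Lemma~5.1 already delivers two strong pieces of information: the function $g = Jf$ lies in $\mathcal{B}$ (so in particular $\int_{S}g(t)d\mu(t)=0$ and $g$ solves d'Alembert's equation \eqref{eq6000}), and the pair $(f,g)$ satisfies Wilson's equation $f(xy)+f(x\sigma(y)) = 2f(x)g(y)$ exactly. The crucial step is to feed this exact Wilson identity through the measure in the right way. Substituting $y \mapsto yt$ into Wilson and using the automorphism property $\sigma(yt)=\sigma(y)\sigma(t)$, then integrating with respect to $t$, gives
\[
\int_{S}f(xyt)d\mu(t) + \int_{S}f(x\sigma(y)\sigma(t))d\mu(t) = 2f(x)\int_{S}g(yt)d\mu(t).
\]
Applying Wilson once more, now with $x$ replaced by $x\sigma(y)$ and $y$ replaced by $t$, and integrating, I obtain $\int_{S}f(x\sigma(y)t)d\mu(t) + \int_{S}f(x\sigma(y)\sigma(t))d\mu(t) = 2f(x\sigma(y))\int_{S}g(t)d\mu(t) = 0$, because $\int_{S}g\,d\mu = 0$. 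Eliminating the common term gives the \emph{exact} identity
\[
\int_{S}f(x\sigma(y)t)d\mu(t) - \int_{S}f(xyt)d\mu(t) = -2f(x)\int_{S}g(yt)d\mu(t).
\]

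Now I would compare this exact identity with the hypothesis \eqref{ahmedred}: the triangle inequality yields
\[
 2|f(x)|\,\Bigl|\int_{S}g(yt)d\mu(t) + f(y)\Bigr| \leq \delta
\]
for all $x,y \in S$. Since $f$ is unbounded, letting $|f(x)| \to \infty$ with $y$ fixed forces $f(y) = -\int_{S}g(yt)d\mu(t)$ for every $y\in S$. Substituting this back into the exact identity turns it into
\[
\int_{S}f(x\sigma(y)t)d\mu(t) - \int_{S}f(xyt)d\mu(t) = 2f(x)f(y),
\]
which is exactly \eqref{eq030000}.

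The main obstacle — and the point where the proof differs in character from the Kannappan analogue in Theorem~3.2 — is that in $\mathcal{B}$ we have $\int_{S}g\,d\mu = 0$, so the trick used in Theorem~3.2 of producing a scalar multiple relation $g = \lambda f$ from $\int g\,d\mu$ is unavailable. The remedy is precisely to use $\int_{S}g\,d\mu = 0$ in the \emph{opposite} direction: it is what makes the auxiliary integral $\int f(w\sigma(t))d\mu(t)$ collapse to $-\int f(wt)d\mu(t)$ (for $w = x\sigma(y)$), thereby converting Wilson's identity into an exact Van~Vleck-type identity that differs from \eqref{eq030000} only by the term $2f(x)(f(y)+\int g(yt)d\mu(t))$. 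Getting this cancellation to fall out cleanly, and then invoking unboundedness of $f$ rather than of $g$, is the delicate point.
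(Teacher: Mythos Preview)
Your proof is correct and follows the same overall strategy as the paper: invoke Lemma~5.1 to get Wilson's equation and $\int_{S}g\,d\mu=0$, establish the key identity $\int_{S}g(yt)\,d\mu(t)=-f(y)$, and then read off \eqref{eq030000}. The difference lies in how you reach that key identity. The paper first proves the auxiliary double-integral relations $\int_{S}\int_{S}f(xs\sigma(k))\,d\mu(s)\,d\mu(k)=f(x)\int_{S}\int_{S}g(s\sigma(k))\,d\mu(s)\,d\mu(k)$ and $\int_{S}\int_{S}g(s\sigma(k))\,d\mu(s)\,d\mu(k)=\int_{S}f(t)\,d\mu(t)$ (the latter by comparing with the approximate inequality (5.3) from Lemma~5.1), and only then unwinds the definition of $g$ to obtain $\int_{S}g(yt)\,d\mu(t)=-f(y)$. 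You instead manipulate Wilson directly to produce the \emph{exact} identity $\int_{S}f(x\sigma(y)t)\,d\mu(t)-\int_{S}f(xyt)\,d\mu(t)=-2f(x)\int_{S}g(yt)\,d\mu(t)$ and compare it straight against the hypothesis \eqref{ahmedred}. Your route is shorter, avoids the double-integral detour, and uses only the Wilson identity and $\int_{S}g\,d\mu=0$ from Lemma~5.1 rather than the approximate bound (5.3); the paper's route, on the other hand, makes the constant $\int_{S}\int_{S}g(s\sigma(k))\,d\mu(s)\,d\mu(k)$ explicit, which is not needed for the theorem but ties back to the description of $\mathcal{B}$. Your treatment of the bounded case (which the paper leaves implicit) is also fine.
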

\begin{proof} Assume that $f$ is an unbounded solution of (\ref{ahmedred}).  From Lemma 5.1 (3) $f,g$
are solutions of Wilson's functional equation (\ref{wilson}). Taking
$y=s$ in (\ref{wilson}) and integrating the result obtained with
respect to $s$ we get
\begin{equation}\label{2016}
\int_{S}f(xs)d\mu(s)+\int_{S}f(x\sigma(s))d\mu(s)=0
\end{equation} because $\int_{S}g(s)d\mu(s)=0$. By replacing $y$ by
$s\sigma(k)$ in in (\ref{wilson}) and integrating the result
obtained with respect to $s$ and $k$ we obtain
$$
\int_{S}\int_{S}f(xs\sigma(k))d\mu(s)d\mu(k)+\int_{S}\int_{S}f(xs\sigma(k))d\mu(s)d\mu(k)=2f(x)\int_{S}\int_{S}g(s\sigma(k))d\mu(s)d\mu(k).
$$
That is \begin{equation}\label{2017}
\int_{S}\int_{S}f(xs\sigma(k))d\mu(s)d\mu(k)=f(x)\int_{S}\int_{S}g(s\sigma(k))d\mu(s)d\mu(k).
\end{equation}Now from (5.3) and (\ref{2017}) we get $$|f(x)(\int_{S}\int_{S}g(s\sigma(k))d\mu(s)d\mu(k)-
\int_{S}f(t)d\mu(t))|\leq\frac{\delta\|\mu\|}{2}$$ for all $x\in S.$
Since $f$ is assumed to be unbounded then we get
\begin{equation}\label{2018}
\int_{S}\int_{S}g(s\sigma(k))d\mu(s)d\mu(k)=\int_{S}f(t)d\mu(t).
\end{equation} The function $g$ satisfies d'Alembert's functional equation (1.7) and  $\int_{S}g(s)d\mu(s)=0$ then we have
 $\int_{S}g(yk)d\mu(s)=-\int_{S}g(y\sigma(k))d\mu(s)$ for all $y\in S$. So,  by using the definition of $g$,
equations (\ref{2017}) and (\ref{2018})  we have
\begin{equation}\label{2019}
\int_{S}g(yk)d\mu(k)=-\int_{S}g(y\sigma(k))d\mu(k)=\frac{-\int_{S}\int_{S}f(y\sigma(k)t)d\mu(k)d\mu(t)}{\int_{S}f(s)d\mu(s)}\end{equation}$$
=\frac{-f(y)\int_{S}\int_{S}g(\sigma(k)t)d\mu(k)d\mu(t)}{\int_{S}f(s)d\mu(s)}=\frac{-f(y)\int_{S}f(t)d\mu(t)}{\int_{S}f(s)d\mu(s)}=-f(y).
$$ Finally, from  (\ref{wilson}), (\ref{2016}) and (\ref{2019}) for all $x,y\in
S$ we have
$$\int_{S}f(x\sigma(y)t)d\mu(t)-\int_{S}f(xyt)d\mu(t)=-\int_{S}f(x\sigma(y)\sigma(t))d\mu(t)-\int_{S}f(xyt)d\mu(t)$$
$$=-\int_{S}[f(x\sigma(yt))d\mu(t)+f(xyt)]d\mu(t)]$$
$$=-2f(x)\int_{S}g(yt)d\mu(t)=2f(x)f(y).$$ That is $f$ is a solution
of Van Vleck's functional equation (\ref{eq030000}). This completes
the proof.

\end{proof}



\begin{thebibliography}{20}\bibitem{ba}  Badora, R., On a joint generalization of Cauchy's and d'Alembert's
functional equations. Aequationes Math. \textbf{43}(1992), No. 1,
72-89.\bibitem{j2}   Baker, J. A., The stability of the cosine
equation, Proc. Amer. Math. Soc. \textbf{80}(1980), 411-416.
\bibitem{j1}  Baker, J.A.,   Lawrence, J.  and   Zorzitto, F., The stability of
the equation f(x + y) = f(x)f(y), Proc. Amer. Math. Soc. \textbf{74}
(1979), 242-246.
\bibitem{b}  Bouikhalene, B.,  Elqorachi, E. and  Rassias, J. M., "The superstability
of d'Alembert's functional equation on the Heisenberg group,"
Applied Mathematics Letters, \textbf{23}(2000), No.1, 105-109.
\bibitem{Bouikha}   Bouikhalene, B. and Elqorachi, E., An extension of Van Vleck's functional equation for the sine, Acta Math. Hungarica First online: 20 June
2016.
 \bibitem{BELAID}   Bouikhalene, B. and Elqorachi, E., Hyers-Ulam stability of spherical
function, Georgian Math. J.  \textbf{23}(2016).
 \bibitem{01} d'Alembert, J., Recherches sur la courbe que forme une corde tendue mise en vibration,
I. Hist. Acad. Berlin \textbf{1747}(1747), 214-219
\bibitem{02} d'Alembert, J., Recherches sur la courbe que forme une corde tendue mise en vibration,
II. Hist. Acad. Berlin \textbf{1747}(1747), 220-249
\bibitem{03} d'Alembert, J., Addition au M\'emoire sur la courbe que forme une corde tendue mise
en vibration. Hist. Acad. Berlin \textbf{1750}(1750), 355-360
\bibitem{da1}   Davison, T.M.K.,  D'Alembert's functional equation on
topological groups. Aequationes Math., 76(2008), 33-53.
\bibitem{da2}  Davison, T.M.K. D'Alembert's functional equation on topological monoids. Publ.
Math. Debrecen, \textbf{75}(2009), 1/2, 41-66.
\bibitem{ebanks} Ebanks, B.R., Stetk{\ae}r, H. d'Alembert's other functional equation on monoids with
involution, Aequationes Math. \textbf{ 89}(1)(2015), 187-206.
\bibitem{Elqorachi} Elqorachi, E., Integral Van Vleck's and Kannappan's  functional equations on
semigroups, arXiv preprint arXiv:1605.05248, 2016 - arxiv.org
\bibitem{e}  Elqorachi, E.  and  Akkouchi, M., The superstability of the generalized
d'Alembert functional equation, Georgian Math. J. \textbf{10}(2003),
503-508.
 \bibitem{akk2} Elqorachi, E. and
Akkouchi, M., On generalized d'Alembert and Wilson functional
equations. Aequationes Math., \textbf{66}(3)(2003), 241-256
\bibitem{el1} Elqorachi, E.,
Akkouchi, M.,  Bakali, A. and Bouikhalene, B., Badora's equation on
non-abelian locally compact groups. Georgian Math. J.
\textbf{11}(2004), No. 3, 449-466.
\bibitem{el2} Elqorachi, E. and Bouikhalene, B.,  Functional Equation and $\mu$-Spherical
Functions. Georgian Math. J. \textbf{15}(2008), No. 1, 1-20
\bibitem{salma} Elqorachi, E. and Redouani, A., Solutions and stability of a variant of Van Vleck's and d'Alembert's functional
equations, manuscript (2016).
\bibitem{ger}  Ger, R., Superstability is not natural, Rocznik
Nauk.-Dydakt. Prace Mat. \textbf{159}(1993), No. 13, 109-123.
\bibitem{se}   Ger, R.  and   \v{S}emrl, P., The stability of the exponential equation,
Proc. Amer. Soc. V. \textbf{124}(1996), 779-787.
\bibitem{5}  Forti, G. L., Hyers-Ulam stability of functional equations in several variables.
Aequationes Math. \textbf{{50}}(1995), 143-190.
\bibitem{10}
   Hyers, D. H.   Isac, G. I.  and  Rassias, Th. M., Stability of Functional Equations in Several Variables,
Birkh\"{a}user, Basel, 1998.
\bibitem{K}  Kannappan, Pl. A functional equation for the cosine.  Canad. Math.
Bull. \textbf{2}(1968), 495-498.
\bibitem{kim1} Kim, G. H, On the stability of trigonometric functional
equations, Advances in Difference Equations, vol. 2007, Article ID
90405, 10 pages.
\bibitem{kim2} Kim, G. H, On the stability of the
Pexiderized trigonometric functional equation, Applied Mathematics
and Computation, \textbf{203}(2008), No. 1,  99-105.
\bibitem{la}  Lawrence, J., The stability of multiplicative semigroup
homomorphisms to real normed algebras, Aequationes Math. \textbf{28}
(1985), 94-101.
\bibitem{P}   Perkins, A.M. and   Sahoo, P.K., On two functional equations
with involution on groups related to sine and cosine functions.
 Aequationes Math. (2014). doi:10.1007/ s00010-014-0309-z.
\bibitem{red}  Redouani, A.,
 Elqorachi, E. and   Rassias, M. Th.
 The superstability of d'Alembert's functional equation on
step $2$ nilpotent groups, Aequationes Math., \textbf{74}(2007), No.
3, 226-241.
 \bibitem{sino} Sinopoulos, P., Contribution to the study of two
functional equations, Aequationes Math., \textbf{56}(1)(1998),
91-97.\bibitem{stkan}  Stetk{\ae}r, H., Kannappan's functional
equation on semigroups with involution. Semigroup Forum, DOI:
10.1007/s00233-015-9756-7.
\bibitem{St3}  Stetk\ae r, H., Van Vleck's functional equation for the
sine.  Aequationes Math. \textbf{90}(1)(2016), 25-34.
\bibitem{stetkaer}  Stetk\ae r, H., A variant of d'Alembert's
functional equation. Aequationes Math., \textbf{89}(3)(2015),
657-662.
\bibitem{07}  Stetk{\ae}r, H.,
Functional Equations on Groups. World Scientific Publishing Co,
Singapore (2013).
\bibitem{sph}  Stetk\ae r, H., D'Alembert's equation and spherical
functions, Aequationes Math. \textbf{48}(2)(1994), 220-227.
\bibitem{V1}  Vleck Van, E.B., A
functional equation for the sine. Ann. Math. Second Ser. \textbf{11}
(4), 161-165 (1910).
 \bibitem{V2}  Vleck  Van, E.B., A functional equation for the
sine. Additional note. Ann. Math. Second Ser. \textbf{13}(1/4), 154
(1911-1912). \bibitem{zeglami} Zeglami D. and  Fadl B. Integral
functional equations on locally compact groups with involution,
Aequationes mathematicae, pp 1-16, First online: 21 March 2016


\end{thebibliography}
\end{document}